\newcommand\BibTeX{{\rmfamily B\kern-.05em \textsc{i\kern-.025em b}\kern-.08em
T\kern-.1667em\lower.7ex\hbox{E}\kern-.125emX}}
\renewcommand{\d}[1]{\ensuremath{\operatorname{d}\!{#1}}}
\newtheorem{assum}{Assumption}
\newtheorem{defn}{Definition}
\begin{document}

\title{Finite-time Newton seeking control}

\author[1]{Martin Guay*}

\author[2]{Mouhacine Benosman}

\authormark{Guay and Benosman}

\address[1]{\orgdiv{Dept. Chemical Engineering}, \orgname{Queen's University}, \orgaddress{\state{Kingston, Ontario}, \country{Canada}}}

\address[2]{\orgname{Mitsubishi Electric Research Laboratories}, \orgaddress{\state{Cambridge, MA}, \country{USA}}}

\corres{*Martin Guay, Department of Chemical Engineering, Queen's University, Kingston, ON, K7L 3N6, Canada. \email{guaym@queensu.ca} }


\abstract[Summary]{This paper proposes a finite-time Newton seeking control design for systems described by  unknown multivariable static maps. The Newton seeking system has an averaged system that implements a Newton continuous-time algorithm. The averaged Newton seeking system is shown to achieve finite-time stability of the unknown optimum of the static map. An averaging analysis demonstrates that the proposed Newton-seeking achieves finite-time practical stability of the optimum. A simulation study is used to demonstrate the effectiveness of the design method.}

\keywords{Extremum seeking, real-time optimization, finite-time control, uncertain systems}
%

\maketitle


\section{Introduction}
Finite-time optimization techniques have considerable appeal for the design of real-time optimization for practical applications. The ability to reach the optimum of a cost function of interest in a prescribed time can provide added flexibility in engineering design problems requiring precise synchronous control tasks. In continuous-time systems, gradient-based descent algorithms have been proposed \cite{Cortes2006,ICML20,garg2020}. In \cite{ICML20,garg2020} \footnote{It appears that, albeit using different proof techniques, both groups have come up with similar development in parallel.} a class of discontinuous scaled gradient/Hessian dynamical systems was proposed for the design of continuous-time optimization algorithms with finite-time convergence. These results have been extended in \cite{IJC20} to the case of time-varying cost functions, using a class of scaled second order time-varying continuous optimization flows.

In this study, we propose to exploit second order information in the development of a new class of zero-order Newton finite-time algorithms. While such methods can be readily designed when the cost function {and its derivatives are} known, a limited number of techniques can be used effectively to achieve finite-time stability of the unknown optimum when the cost function is uncertain or unknown. 

In situations where the cost function is unknown but measured, the recommended technique is extremum seeking control (ESC). ESC is a feedback control mechanism designed to drive an unknown nonlinear dynamical system to the optimum of a measured variable of interest \cite{5572972}. ESC is a well established real-time optimization technique with a solid theoretical foundation. Its stability properties, initially characterized in  \cite{KW:SESFGDS} and \cite{Tan2006889}, are well understood. A growing literature has been evolving to complement, generalize and improve the basic schemes. One major extension of ESC is the design of multivariable real-time optimization systems using Newton-based techniques as proposed in \cite{ghaffari2012multivariable}. Newton-seeking techniques implement a Newton update that alleviates performance problems associated with the scaling of the Hessian matrix. An alternative Newton seeking technique was proposed in \cite{labar2019newton} where Lie-bracket averaging techniques were considered. The other main different between the original formulation of \cite{ghaffari2012multivariable} and the reformulation presented in \cite{labar2019newton} is the use of alternative continuous-time Newton optimization techniques. In fact, the approach proposed in \cite{labar2019newton} is an application of  the classical work of Gavurin \cite{gavurin1958nonlinear} as presented in \cite{airapetyan1999continuous}. Such methods avoid the explicit need for the inverse of the Hessian of the unknown cost. It is important to note that continuous-time optimization algorithms have been an active area of research in the optimization literature. Representative recent developments in this area can be found in \cite{massicot2019line}, along with the finite-time generalizations \cite{garg2018new2} and \cite{IJC20}. The main difference with respect to these recent results is the fact that we proceed in an extremum seeking setting (zero-order methods), i.e., we do not assume the knowledge of the gradient or any higher order derivative of the cost function.

 In this manuscript, we propose a Newton seeking technique that can achieve finite-time stability in the practical sense. Given a measured cost function with an unknown mathematical formulation, we propose a multivariable Newton seeking that brings the system to a neighborhood of the unknown optimum value in finite-time. It provides an extension of the gradient based ESC technique proposed in \cite{guayFT2} and \cite{poveda2020fixed}. As in \cite{guayFT2} and in contrast to \cite{poveda2020fixed}, the stability analysis demonstrates that the system yields an averaged dynamical system with a finite-time stable equilibrium at the unknown optimum of the measured cost function. A finite-time Newton seeking technique was originally presented in \cite{guayNFT} without a detailed convergence analysis. In the current paper, we provide a full stability analysis of the proposed finite Newton seeking technique. This includes a detailed analysis of the averaged nonlinear system. This analysis is based on the concept of finite-time input to state stability (FTISS) first introduced by Hong {\it et al} \cite{hong2010FTISS}. As in \cite{guayFT2}, we demonstrate that the averaged system has a finite-time stable equilibrium at the optimum of the unknown cost function. The finite-time stability property of the averaged systems enables the application a classical averaging analysis result \cite{krasnosel1955principle} to show that the closed-loop finite-time Newton seeking system achieves finite-time practical stability of the optimum. 

The paper is structured as follows. Some preliminaries are given in Section \ref{sec1}. The problem formulation is given in Section \ref{sec2}. The proposed ESC is presented in Section \ref{sec3}. Section \ref{sec4} presents a brief simulation study. Conclusions are presented in Section \ref{sec5}.  

%
%
\section{Preliminaries} \label{sec1}
\subsection{Finite-time Stability}

In this section, we present the definition of finite-time stability considered in this study (as stated in \cite{hong2010FTISS}). We introduce the following class of finite-dimensional nonlinear systems:
\begin{align} \label{eq:nonsys}
\dot{x}=F(x)
\end{align}
where $x \in \mathbb{R}^n$ and $F:\mathbb{R}^n\rightarrow \mathbb{R}^n$ is continuous in $x$. 

The continuity of the right hand side of \eqref{eq:nonsys} guarantees existence of at least one solution, possibly non-unique.  The set of all solutions of \eqref{eq:nonsys} with initial conditions $x(t_0)=x_0$  is denoted by $\mathcal{X}(t,t_0,x_0)$ for $t\geq t_0$. In the remainder, the set of all solutions of system \eqref{eq:nonsys} at time $t$ will be simply denoted by $x(t)$. The equilibrium $x_0=0$ is a unique solution of the system in forward time.

\begin{defn} \label{def1}
The equilibrium $x=0$ of {\eqref{eq:nonsys}} is said to be finite-time locally stable if it is Lyapunov stable and such that there exists a settling-time function 
\begin{align*}
T(x_0) = \inf \left \lbrace \bar{T} \geq t_0 \, \left| \, \lim_{t\rightarrow \bar{T}} x(t)=0 \,  ; \, x(t) \equiv 0,\, \forall t\geq \bar{T} \right.\right \rbrace
\end{align*}
in a neighbourhood $U$ of $x=0$. It is globally finite-time stable if $U=\mathbb{R}^n$.
\end{defn}

Finite-time stability can be expressed using a special class of $\mathcal{K}$ functions.  A continuous function $\alpha: \mathbb{R}_{\geq 0} \rightarrow  \mathbb{R}_{\geq 0}$ is a called a class $\mathcal{K}$ function if it is strictly increasing and $\alpha(0)=0$. It is a class $\mathcal{K}_\infty$ function if it is class $\mathcal{K}$ and $\lim_{s \rightarrow \infty} \alpha(s) = \infty$. 

A continuous function $\phi:\mathbb{R}_{\geq 0} \rightarrow  \mathbb{R}_{\geq 0}$ is a generalized class $\mathcal{K}$ function if $\phi(0)=0$ and
\begin{align}
\begin{cases}
\phi(s_1) > \phi(s_2) & \mbox{if}\, \phi(s_1)>0, \, s_1 > s_2 \\
\phi(s_1) = \phi(s_2) & \mbox{if}\, \phi(s_1)=0, \, s_1 > s_2.
\end{cases}
\end{align}
A continuous function $\beta:\mathbb{R}_{\geq 0} \times \mathbb{R}_{\geq 0} \rightarrow  \mathbb{R}_{\geq 0}$ is a generalized $\mathcal{KL}$ function if, for each fixed $t\geq 0$, the function $\beta(s,t)$ is a generalized $\mathcal{K}$ function and each fixed $s\geq 0$, the function $\beta(s,t)$ is such that $\lim_{t \rightarrow T} \beta(s,t)=0$ for $T \leq \infty$.
We can characterize finite-time stability using generalized $\mathcal{K}$ functions as follows: 
\begin{defn} \label{def:FTstab}
System \eqref{eq:nonsys} is finite-time stable if there exists a generalized $\mathcal{KL}$ function $\beta:\mathbb{R}_{\geq 0} \times \mathbb{R}_{\geq 0} \rightarrow  \mathbb{R}_{\geq 0}$  such that every solution $x(t)$ satisfies: $\| x(t) \| \leq \beta(\| x(0) \|,t)$
with $\beta(r,t) \equiv 0$ when $t \geq \bar{T}(r)$ with $\bar{T}(r)$ continuous with respect to $r$ and $\bar{T}(0)=0$. 
\end{defn}
\begin{defn} \label{det:FTLyap}
Let $V(x)$ be a continuous function. It is called a finite-time Lyapunov function if there exists class $\mathcal{K}_\infty$ functions $\phi_1$ and $\phi_2$ and a class $\mathcal{K}$ function $\phi_3$ such that $\phi_1(\| x \|) \leq V(x) \leq \phi_2(\|x\|)$ and
\begin{align*}
D^+ V(x(t)) \leq -\phi_3(\| x\|)
\end{align*}
where, in addition, $\phi_3$ satisfies: $c_1 V(x)^a \leq \phi_3(\|x\|) \leq c_2 V(x)^a$ for some positive constants $a<1$, $c_1>0$ and $c_2>0$. 
\end{defn}
\subsection{Finite-time input-to-state stability (FTISS)}
For systems with inputs, a finite-time version input-to-state stability was proposed in \cite{hong2010FTISS}. It applies to systems of the form:
\begin{align}\label{eq:nonsysv}
\dot{x}=F(x,v(t) )
\end{align}
where $x\in \mathbb{R}^n$. The function $v:\mathbb{R}_{\geq0} \rightarrow \mathbb{R}^m$ is measurable and locally essentially bounded and the vector value function $F:  \mathbb{R}^n\times \mathbb{R}^m \rightarrow \mathbb{R}^n$ is continuous in $x$ and $v(t)$.
\begin{defn} \label{def:FTISS}
System \eqref{eq:nonsysv} is FTISS if there exists a generalized $\mathcal{KL}$ function $\beta:\mathbb{R}_{\geq 0} \times \mathbb{R}_{\geq 0} \rightarrow  \mathbb{R}_{\geq 0}$ and a class $\mathcal{K}$ function $\alpha: \mathbb{R}_{\geq0} \rightarrow \mathbb{R}_{\geq0}$ such that every solution $x(t)$ satisfies:
\begin{align}
\| x(t) \| \leq \beta(\| x(0) \|,t)+\alpha(\| v(t) \|_\infty)
\end{align}
with $\beta(r,t) \equiv 0$ when $t \geq \bar{T}(r)$ with $\bar{T}(r)$ continuous with respect to $r$ and $\bar{T}(0)=0$. 
\end{defn}

Finally, we will need the following definition of practical finite-time stability.
\begin{defn} \label{def:FTstab}
System \eqref{eq:nonsys} is semi-globally practically finite-time stable if there exists a generalized $\mathcal{KL}$ function $\beta:\mathbb{R}_{\geq 0} \times \mathbb{R}_{\geq 0} \rightarrow  \mathbb{R}_{\geq 0}$ and a positive constant $\zeta>0$  such that every solution $x(t)$ starting in $\mathcal{X}$ satisfies:
\begin{align}
\| x(t) \| \leq \beta(\| x(0) \|,t)+ \zeta
\end{align}
with $\beta(r,t) \equiv 0$ when $t \geq \bar{T}(r)$ with $\bar{T}(r)$ continuous with respect to $r$ and $\bar{T}(0)=0$. 
\end{defn}

\section{Problem formulation}  \label{sec2}

We consider a class of multivariable unknown nonlinear systems described by the following dynamical system: 
\begin{subequations}  
	\begin{align}
   \dot{x} &=u \label{eq:sysdyn} \\
   y&=h(x)  \label{eq:sysout}
   \end{align}  
   \end{subequations}
where $x \in \mathbb{R}^p$ are the state variables, $u \in \mathbb{R}^p$ is the input variable, and $y \in \mathbb{R}$ is the output variable.  It is assumed that the function $h:\mathbb{R}^p  \rightarrow \mathbb{R}$ is sufficiently smooth. The function $h$, is assumed to be unknown.  It has an unknown minimizer $x^\ast$ with an optimal value $y^\ast=h(x^\ast)$. 

The cost function, $h(x)$, meets the following assumption. 
\begin{assum}\label{assum:cost1}
The function $h(x)$ has a unique critical point at $x^*$, that is:
$$\left.\frac{\partial h}{\partial x}\right|_{x=x^\ast}=0.$$
The Hessian of $h$ with respect to $x$ at $x^\ast$ is assumed to be positive definite. In particular, there exists a positive constant $\alpha_h$ and a positive definite symmetric matrix $H^\ast$ such that
$$\frac{\partial^2 h (x)}{\partial x \partial x^\top } =H^\ast \geq \alpha_h I$$ for all $x \in \mathcal{X} \subset \mathbb{R}$. \end{assum}
  
The objective of this study is to develop an ESC design technique that guarantees finite-time convergence to a neighbourhood of the unknown minimizer, $x^\ast$, of the measured function $y=h(x)$. 

 \section{Finite-time Newton seeking controller design and analysis} \label{sec3}
 
\subsection{Proposed target average Newton seeking system}

In this study, we seek a Newton seeking system whose trajectories, subject to choice of suitable dither signals, follow those of that can be achieved using a judicious choice of dither signals. We consider the following target averaged system:
\begin{equation} \label{eq:tarsys}
\begin{aligned} 
\dot{x} =& k \gamma(v) v \\
\dot{v} =&-K \gamma(H v + g)(H v + g)  
\end{aligned}
\end{equation}
where $k$, $K$ are controller gains to be assigned. The matrix $H=\nabla_x^2 h$ is the Hessian of the objective function. The vector $g=\nabla_x h$ denotes the gradient of the cost function.

The function $\gamma(v)$ is given by (\cite{P12,REPP18}):
\begin{align*}
\gamma(v) = \dfrac{c_1}{\| v \|^{\alpha_{1}}} + \dfrac{c_2}{\| v \|^{\alpha_{2}}}
\end{align*}
where $\alpha_1=\dfrac{q_1-2}{q_1-1}$ and $\alpha_2=\dfrac{q_2-2}{q_2-1}$ for $q_1 \in (2,\infty)$ and $q_2 \in (1,2)$.

The function $\gamma(v) v$ is locally Lipschitz continuous everywhere except at $v=0$. It is continuous everywhere. The function $\gamma(H v + g)(H v + g)$ is also not locally Lipschitz continuous for $\forall x$, $\forall v$ that yield a gradient $g$ and Hessian $H$ that satisfy the equation $v=-H^{-1}g$. It is only continuous on this set but locally Lipschitz everywhere else. 

\begin{remark}
We note that the proposed target average system provides a finite-time generalization of the classical continuous-time Newton algorithms proposed in \cite{gavurin1958nonlinear} and \cite{airapetyan1999continuous}. These classical techniques were also considered for the design of Newton seeking system in \cite{labar2019newton}. The main difference with respect to these classical results resides in the finite-time convergence and the possibility to directly tune a pre-defined or prescribed finite convergence time as function of the flow gains $k,\;c_{1},\;c_{2}$, e.g., refer to \cite{ICML20,garg2020,IJC20}.
\end{remark}

We define the coordinate transformation $z=Hv+g$ and rewrite the dynamics as:
\begin{equation}\label{eq:tarsys2}
\begin{aligned}
\dfrac{\d z}{\d t} =& \, -K \gamma(z) H  z +k \gamma(H^{-1}(z-g))(z-g)\\
\dfrac{\d g }{\d t}=&\, k \gamma(H^{-1}(z-g))(z-g) 
\end{aligned}
\end{equation}

This system is such that it has a unique equilibrium at the point $z=0$ and $g=0 \Rightarrow x=x^\ast$. It is continuous everywhere and locally Lipschitz continuous away from the equilibrium and the plane  $z-g=0$. 

The following Lemma establishes the finite-time stability analysis of the equilibrium of \eqref{eq:tarsys2}. 

\begin{lemma}\label{lem1}
Consider the nonlinear system \eqref{eq:tarsys}. Let Assumption \ref{assum:cost1} be satisfied. Then for any $k$, there exists a $K^\ast$ such that for any $K>K^\ast>0$, the optimum $x=x^\ast$, $v=0$, is a globally finite-time stable equilibrium of  \eqref{eq:tarsys}. 
\end{lemma}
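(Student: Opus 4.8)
The plan is to prove finite-time stability of the equilibrium $(z,g)=(0,0)$ of the transformed system \eqref{eq:tarsys2} --- equivalently $(x,v)=(x^\ast,0)$ for \eqref{eq:tarsys} --- by reading \eqref{eq:tarsys2} as a feedback interconnection of a ``Newton-direction error'' loop (the $z$-dynamics, whose dissipation carries the tunable gain $K$) and a ``position'' loop (the $g$-dynamics, with $g=\nabla_x h$; by Assumption \ref{assum:cost1} the Hessian is the fixed matrix $H^\ast$, so $g=H^\ast(x-x^\ast)$). The tool is the finite-time ISS notion of Definition \ref{def:FTISS}: I would establish an FTISS estimate for each subsystem with respect to the other, then close the loop by taking $K$ large enough that the composition of the two finite-time ISS gains is a strict contraction.

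First I would treat $\dot g=k\gamma(H^{-1}(z-g))(z-g)$ with $z$ as an exogenous input. With $V_g=\tfrac12\|g\|^2$, $\dot V_g=k\gamma(H^{-1}(z-g))\,g^\top(z-g)$; splitting $g^\top(z-g)=-\|g\|^2+g^\top z$, bounding the cross term by Young's inequality, and using that $\alpha_h I\le H^\ast$ makes $\|H^{-1}w\|$ comparable to $\|w\|$ (hence $\gamma(H^{-1}w)\ge c_1\kappa_1\|w\|^{-\alpha_1}+c_2\kappa_2\|w\|^{|\alpha_2|}$ with $\kappa_i>0$ depending on the spectrum of $H^\ast$), one obtains $\dot V_g\le -a_1 V_g^{(2-\alpha_1)/2}-a_2 V_g^{(2+|\alpha_2|)/2}+\chi_g(\|z\|_\infty)$ for a $\mathcal K$ function $\chi_g$. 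Since $\alpha_1\in(0,1)$ the exponent $(2-\alpha_1)/2$ is $<1$, so for $z\equiv 0$ this is a finite-time Lyapunov inequality in the sense of Definition \ref{det:FTLyap}, and the forced estimate gives finite-time ISS of the $g$-subsystem with respect to $z$.

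Next I would treat $\dot z=-K\gamma(z)Hz+k\gamma(H^{-1}(z-g))(z-g)$ with $g$ as input. With $V_z=\tfrac12\|z\|^2$, $\dot V_z=-K\gamma(z)\,z^\top Hz+k\gamma(H^{-1}(z-g))\,z^\top(z-g)$; here $z^\top Hz\ge\alpha_h\|z\|^2$ so the dissipation dominates $\alpha_h K\bigl(c_1\|z\|^{2-\alpha_1}+c_2\|z\|^{2+|\alpha_2|}\bigr)$, whose second term grows strictly faster than quadratically. Bounding the coupling term by Young's inequality leaves a part that the $\|z\|$-dissipation absorbs plus a $\mathcal K$ function of $\|g\|$; because $k\gamma(H^{-1}(z-g))(z-g)$ grows at most like $\|z-g\|^{1+|\alpha_2|}$, it is dominated by the $K\|z\|^{2+|\alpha_2|}$ term once $K$ exceeds a threshold $K^\ast$ depending on $k,c_1,c_2,\alpha_h$ and $\|H^\ast\|$, and the residual ISS gain from $g$ to $z$ vanishes as $K\to\infty$. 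This yields finite-time ISS of the $z$-subsystem with respect to $g$, with gain controllable through $K$.

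With these two FTISS properties, choosing $K>K^\ast$ makes the loop gain a strict contraction, so the finite-time small-gain reasoning built on \cite{hong2010FTISS} forces $(z,g)\to(0,0)$ and the two settling-time functions compose into one that is continuous and vanishes at the origin; since the $c_2$ terms supply dissipation over all of $\mathbb R^p$, the result is global, giving globally finite-time stability of $(x,v)=(x^\ast,0)$. An equivalent route is a single composite finite-time Lyapunov function $W=\lambda V_z+V_g$ with $\lambda$ and $K$ large --- using a case split according to whether $\|z\|$ lies above or below a small ($K$-dependent) multiple of $\|g\|$ to handle the indefinite cross term --- yielding $D^+W\le -cW^{a}$ with $a<1$ globally, hence the claim by Definition \ref{det:FTLyap}. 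The main obstacle is the $z$-subsystem step: since $\gamma(\cdot)$ is not locally Lipschitz and the coupling term grows super-linearly, it cannot be treated as a linearly bounded perturbation, so the domination must be argued uniformly in $g$ using the matching super-linear growth of the Newton-error dissipation --- precisely where $K>K^\ast$ and the explicit dependence of $K^\ast$ on the problem data enter. A secondary technical point is the loss of local Lipschitzness on $\{z-g=0\}$ and at the equilibrium, so one should use the Dini-derivative form of the conditions in Definition \ref{det:FTLyap} rather than classical smooth Lyapunov arguments.
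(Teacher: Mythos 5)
Your proposal is sound, but it follows a genuinely different route from the paper. The paper works with the single quadratic function $V_1=\tfrac12\|z\|^2+\tfrac12\|g\|^2$ on \eqref{eq:tarsys2}, expands $\dot V_1$, writes the cross term as $z^\top H H^{-1}(z-g)$ and applies Young's inequality so that the pieces in $\|z\|^{2-\alpha_1}$, $\|z\|^{2-\alpha_2}$ are absorbed by the $K$-dissipation and the pieces in $\|z-g\|^{2-\alpha_1}$, $\|z-g\|^{2-\alpha_2}$ by the $k$-dissipation; explicit thresholds on the Young parameters and on $K$ then give $\dot V_1\le -cV_1^{(2-\alpha_1)/2}$ directly, hence global finite-time stability in one shot, with no ISS machinery. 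You instead split \eqref{eq:tarsys2} into the $z$- and $g$-subsystems, prove an FTISS estimate for each (Definition \ref{def:FTISS}) and close the loop by a small-gain argument with the $z$-to-$g$ gain fixed and the $g$-to-$z$ gain shrunk by $K$. Your decisive observation --- that the coupling $k\gamma(H^{-1}(z-g))(z-g)$ grows like $\|z-g\|^{1+|\alpha_2|}$ and must be matched against the $K\|z\|^{2+|\alpha_2|}$ dissipation (and likewise $\|z-g\|^{1-\alpha_1}$ against $K\|z\|^{2-\alpha_1}$) --- is exactly the mechanism the paper encodes through its Young-inequality bounds, and this power matching is also what makes your ISS gain from $g$ to $z$ \emph{linear} with a slope that tends to zero as $K\to\infty$; a cruder bound would yield a superlinear gain and the global small-gain condition would fail for large signals, so this step should be done with the matched exponents as you indicate. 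The one point to verify is the closing tool: \cite{hong2010FTISS} is invoked later in the paper only for a cascade-type interconnection (FTISS system driven by a finite-time stable system), whereas your loop is a genuine feedback interconnection and needs a finite-time small-gain theorem (or a short ad hoc argument replacing it). If that result is not available in the cited reference, your fallback of a composite Lyapunov function $W=\lambda V_z+V_g$ with $K$ large is exactly the paper's proof (with $\lambda=1$ and Young's inequality in place of your case split), so the argument closes either way. What your route buys is modularity --- it isolates where $K^\ast$ enters and anticipates the interconnection structure the paper itself uses later for the averaged system with the Hessian estimator --- while the paper's single-Lyapunov computation is more elementary and yields the explicit constants $k_1,k_2,K^\ast$ and the rate $-2^{(2-\alpha_1)/2}k_7V_1^{(2-\alpha_1)/2}$ directly.
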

\begin{proof}
We pose the candidate Lyapunov function $V_1=\dfrac{1}{2} z^\top z+ \dfrac{1}{2} g^\top g$. Its derivative along the trajectories of \eqref{eq:tarsys2} yields:
\begin{align*}
\dot{V}_1 =-K \gamma(z) z^\top Hz +k\gamma(H^{-1}(z-g)) (z+g)^\top (z-g) 
\end{align*}
This can be rewritten in the following manner:
\begin{align*}
\dot{V}_1 =& -K \gamma(z) z^\top Hz  - k\gamma(H^{-1}(z-g)) (z-g)^\top (z-g) +2k \gamma(H^{-1}(z-g)) z^\top (z-g)
\end{align*}
Let us first consider the function $\phi(z, g)= \gamma(H^{-1}(z-g)) (z-g)^\top (z-g)$. It is given explicitly as:
\begin{align}
\phi(z, g) =& \dfrac{c_1 (z-g)^\top (z-g)}{\|H^{-1}(z-g)\|^{\alpha_{1}}} + \dfrac{c_2(z-g)^\top (z-g)}{\|H^{-1}(z-g)\|^{\alpha_{2}}}.
\end{align}
For $0<\alpha_1<1$, the first term can be lower bounded as follows:
\begin{align*}
\dfrac{c_1 (z-g)^\top (z-g)}{\|H^{-1}(z-g)\|^{\alpha_{1}}} &\geq \dfrac{c_1 (z-g)^\top (z-g)}{\|H^{-1}\|^{\alpha_{1}} \|(z-g)\|^{\alpha_{1}}}\geq \dfrac{c_1 (z-g)^\top (z-g)}{\lambda_{max}(H^{-1})^{\alpha_{1}} \|(z-g)\|^{\alpha_{1}}}\\
& = \dfrac{c_1\lambda_{min}(H)^{\alpha_1}  (z-g)^\top (z-g)}{ \|(z-g)\|^{\alpha_{1}}} =c_1 \lambda_{min}(H)^{\alpha_1} \| z-g\|^{2-\alpha_1}.
\end{align*}
Similarly for $\alpha_2<0$, we obtain:
\begin{align*}
c_2 \|(z-g)\|^2 &\|H^{-1}(z-g)\|^{-\alpha_{2}} \geq {c_2} \lambda_{min}(H^{-1})^{-\alpha_{2}} \|(z-g)\|^{2-\alpha_{2}}=c_2 \lambda_{max}(H)^{\alpha_2} \| z-g\|^{2-\alpha_2}.
\end{align*}

Next, we consider the function $\beta(z,g)=\gamma(H^{-1}(z+g)) z^\top (z-g)$:
\begin{align} \label{eq:ineq3}
\beta(z, g) =& \dfrac{c_1 z^\top (z-g)}{\|H^{-1}(z-g)\|^{\alpha_{1}}} +c_2 \dfrac{z^\top (z-g)}{\|H^{-1}(z-g)\|^{\alpha_{2}}}.
\end{align}

The first term of \eqref{eq:ineq3} can be bounded as follows:
\begin{align*}
 \dfrac{c_1 z^\top (z-g)}{\|H^{-1}(z-g)\|^{\alpha_{1}}} &\leq \dfrac{c_1 z^\top H H^{-1} (z-g)}{\|H^{-1}(z-g)\|^{\alpha_{1}}} \leq c_1 \|H z\| \|H^{-1}(z-g)\|^{1-\alpha_1}.
\end{align*}
Using Young's inequality, one can write:
\begin{align*}
 \dfrac{c_1 z^\top (z-g)}{\|H^{-1}(z-g)\|^{\alpha_{1}}} \leq & c_1 \dfrac{k_1}{2-\alpha_1} \| H z\|^{2-\alpha_1}+ \dfrac{c_1(1-\alpha_1)}{2-\alpha_1} k_1^{-1/(1-\alpha_{1})} \| H^{-1} (z-g)\|^{2-\alpha_1} \\
 &\leq c_1 \dfrac{k_1}{2-\alpha_1}  \lambda_{max}(H)^{2-\alpha_1} \|z\|^{2-\alpha_1} + \dfrac{c_1(1-\alpha_1)}{2-\alpha_1} k_1^{-1/(1-\alpha_{1})} \lambda_{min}(H)^{\alpha_1-2} \|z-g\|^{2-\alpha_1} 
 \end{align*}
 for some positive constant $k_1>0$. 
 Similarly, one can bound the second term of \eqref{eq:ineq3} to obtain:
 \begin{align*}
 \dfrac{c_2 z^\top (z-g)}{\|H^{-1}(z-g)\|^{\alpha_{2}}} \leq &c_2\|H z\| \|H^{-1}(z-g)\|^{2-\alpha_{2}}\\
 & \leq c_2 \dfrac{k_2}{2-\alpha_2}  \lambda_{max}(H)^{2-\alpha_2} \|z\|^{2-\alpha_2} + \dfrac{c_2(1-\alpha_2)}{2-\alpha_2} k_2^{-1/(1-\alpha_{2})} \lambda_{min}(H)^{\alpha_2-2} \|z-g\|^{2-\alpha_2}  \end{align*}
 for some positive constant $k_2>0$. We can then write $\dot{V}_1$ as:
\begin{align*}
\dot{V}_1 \leq &-K c_1\lambda_{min}(H)\|z\|^{2-\alpha_1} -K c_2 \lambda_{min}(H) \| z\|^{2-\alpha_2}  -k c_1 \lambda_{min}(H)^{\alpha_{1}}  \| z-g\|^{2-\alpha_1}- k c_2 \lambda_{max}(H)^{\alpha_{2}} \| z-g\|^{2-\alpha_2} \\
&+2k c_1 \dfrac{k_1}{2-\alpha_1}  \lambda_{max}(H)^{2-\alpha_1} \|z\|^{2-\alpha_1} +2 \dfrac{k c_1(1-\alpha_1)}{2-\alpha_1} k_1^{-1/(1-\alpha_{1})} \lambda_{min}(H)^{\alpha_1-2} \| z-g\|^{2-\alpha_1} \\
 &+2k c_2 \dfrac{k_2}{2-\alpha_2}  \lambda_{max}(H)^{2-\alpha_2} \|z\|^{2-\alpha_2} + 2\dfrac{k c_2(1-\alpha_2)}{2-\alpha_2} k_2^{-1/(1-\alpha_{2})} \lambda_{min}(H)^{\alpha_2-2} \| z-g\|^{2-\alpha_2}
 \end{align*}
 
It is then straightforward to show that for any:
$$k_1>\left(\dfrac{1}{\lambda_{min}(H)^2}\dfrac{1-\alpha_1}{2-\alpha_1}\right)^{1-\alpha_1}, \,\,\,k_2>\left(\dfrac{\lambda_{max}(H)^{\alpha_2-2}}{\lambda_{min}(H)^{\alpha_{2}}}\dfrac{1-\alpha_2}{2-\alpha_2}\right)^{1-\alpha_2}$$
and 
$$K>2 k\, \mbox{max}\left[ \frac{ k_1}{2-\alpha_1} \frac{\lambda_{max}(H)^{2-\alpha_1}}{\lambda_{min}(H)}, \,  \frac{ k_2}{2-\alpha_2} \frac{\lambda_{max}(H)^{2-\alpha_2}}{\lambda_{min}(H)} \right],$$
there exist positive nonzero constants $k_3$, $k_4$, $k_5$ and $k_6$ such that:
\begin{align*}
\dot{V}_1 \leq &-k_3 \|z\|^{2-\alpha_1} -k_5 \| z\|^{2-\alpha_2}  -k_4\| z-g\|^{2-\alpha_1}- k_6  \| z-g\|^{2-\alpha_2}.
\end{align*}
As a result, we obtain the following inequality:
\begin{align*}
\dot{V}_1 \leq &-k_3 \|z\|^{2-\alpha_1} -k_5\| z-g\|^{2-\alpha_1}.
\end{align*}

From the derivation, one can pick $K$ large enough such that $k_3> k_5$. Consequently, we can write:
\begin{align*}
\dot{V}_1 \leq &-(k_3-k_5) \|z\|^{2-\alpha_1} -k_5(\| z-g\|^{2-\alpha_1}+\|z\|^{2-\alpha_1}).
\end{align*}
Since, $\|g\|^{2-\alpha_1}\leq \|z\|^{2-\alpha_1} + \| z-g\|^{2-\alpha_1}$, we can deduce that:
\begin{align*}
\dot{V}_1 \leq &-(k_3-k_5) \|z\|^{2-\alpha_1} -k_5\|g\|^{2-\alpha_1}.
\end{align*}
If we let $k_7 = \min[(k_3-k_5),k_5]>0$, it follows that:
\begin{align*}
\dot{V}_1 \leq &-k_7 \|z\|^{2-\alpha_1}+k_7 \|g\|^{2-\alpha_1}
\end{align*}
or,
\begin{align*}
\dot{V}_1 \leq &-k_7 (2 V_1)^{(2-\alpha_1)/2}=-2^{(2-\alpha_1)/2} k_7 V_1^{(2-\alpha_1)/2}.
\end{align*}

Since $(2-\alpha_1)/2<1$, it follows that the proposed target system has a globally finite-time equilibrium at $z=0$ and $g=0$. As a result, the point $x=x^\ast$, $v=0$, is a globally finite-time stable equilibrium of \eqref{eq:tarsys}. This completes the proof.
 \end{proof}

\subsection{Finite-time Newton seeking system}\label{sec4}
We first define the following signals:
\begin{align*}
M(t)=[\sin(\omega_1 t), \, \ldots, \, \sin(\omega_n t) ]^\top ,
\end{align*}
We also define the following signals: $S_{ii}(t)=\dfrac{16}{a^2}\bigg( \sin(\omega_i t)^2 -\dfrac{1}{2} \bigg)$
for $i=1,\,\ldots, n$ and $S_{ij}(t)=\dfrac{8}{a} \sin(\omega_i t)\sin(\omega_j t)$ 
for $i\neq j$, $i,j=1, \, \ldots, \,n$. Each frequency are chosen such that the sinusoidal dither signals are mutually orthogonal.

We let $S(t)$ be the $n \times n$ matrix with elements $S_{ij}$. We define the following vector $s(t)= vec(S(t))$.
The operator $vec: \mathbb{R}^{n \times n} \rightarrow \mathbb{R}^{\frac{n(n+1)}{2}}$ vectorizes the symmetric matrix $S(t)$ as follows:
\begin{align*}
s(t)=\left[ S_{11} ,\, S_{12},\, \ldots \, S_{1n}, \ S_{22}, \, \ldots, \, S_{2n} \, \ldots, \, S_{nn} \right]^\top .
\end{align*}

The proposed closed-loop finite-time Newton seeking system is given:

\begin{equation} \label{eq:ftNS}
\begin{aligned}
\dfrac{\d x}{\d t} =& \,{k}\gamma(v) v \\
\dfrac{\d v }{\d t}=&\, -\gamma(Hv+\delta_1) K (Hv +\delta_1)\\
\dfrac{\d\xi }{\d t}=&\, -K_2 \gamma(\xi-\delta_2)(\xi-\delta_2) 
\end{aligned}
\end{equation}
where  $K_2>0$ is a positive constant, $\xi=vec(H)$,  $\delta_1(x,t)=\dfrac{2}{a} h(x+a M(t))M(t)$  and $\delta_2(x,t)=h(x+a M(t))s(t).$ 


A formal average of this system is given by:
\begin{equation} \label{eq:crazav}
\begin{aligned}
\dfrac{\d x^{a}}{\d t} =& \,{ k}\gamma(v^{a}) v^{a} \\
\dfrac{\d v^{a}}{\d t}=&\,  -\frac{K}{T} \int_0^T \bigg( \frac{c_1 (H^{a} v^{a} + {\delta_1}(x^{a},t))}{\|H^{a} v^{a} + {\delta_1}(x^{a},t))\|^{\alpha_{1}}} + \frac{c_2 (H^{a} v^{a} +{ \delta_1}(x^{a},t))}{\|H^{a} v^{a} +{\delta_1}(x^{a},t))\|^{\alpha_{2}}} \bigg) dt \\
\dfrac{\d\xi^{a} }{\d t}=& \, -\frac{K_2}{T} \int_0^T \bigg( \frac{c_1 (\xi^{a} - \delta_2(x^{a},t))}{\|\xi^{a} - \delta_2(x^{a},t))\|^{\alpha_{1}}} + \frac{c_2 (\xi^{a}- \delta_2(x^{a},t))}{\|\xi^{a} - \delta_2(x^{a},t))\|^{\alpha_{2}}} \bigg) dt.
\end{aligned}
\end{equation}

The term $Hv + \delta_1(x,t)$ can be expanded in the following manner:
\begin{align*}
Hv + \delta_1(x,t) =& Hv+ \frac{2}{a} h(x + a { M(t)}) { M(t)}\\
=&  \frac{1}{a} \bigg( 2 h(x){M(t)}+2\nabla h(x) { M(t)^\top M(t)}a +Hva + a^2M(t)^\top  \nabla^2 h(x) { M(t).} \bigg).
\end{align*}
%
%
As a result, we obtain:
\begin{align*}
\frac{1}{T} \int_{0}^T (Hv+\delta_1(x,t) ) dt \approx& (Hv+ \nabla_x h).
\end{align*}

For the term $\xi-\delta_2(t,x)$, one can show that:
\begin{align*}
\frac{1}{T} \int_{0}^T (\xi-\delta_2(x,t) ) dt \approx& (\xi - vec(H^\ast)),
\end{align*}
%
{where $H^\ast$ is the Hessian of $h(x)$}. The next result considers the simplified case in which the Hessian estimate $\xi^{a}$ is at its equilibrium $\xi^{a} = vec(H^\ast)$.
\begin{lemma} \label{lem2}
Consider the nonlinear system \eqref{eq:crazav}. Let Assumption \ref{assum:cost1} be satisfied and further assume that $\xi^{a}=vec(H^\ast)$. Then the optimum $x^{a}=x^\ast$, $v^{a}=0$ is a finite-time stable equilibrium of the system. 
\end{lemma}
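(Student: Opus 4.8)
The plan is to reduce the averaged system \eqref{eq:crazav}, under the standing hypothesis $\xi^{a}=vec(H^\ast)$, to the target averaged system \eqref{eq:tarsys} with the constant Hessian $H=H^\ast$, and then invoke Lemma~\ref{lem1}. Two structural facts are recorded first. By Assumption~\ref{assum:cost1} the Hessian $\partial^2 h/\partial x\partial x^\top\equiv H^\ast\geq\alpha_h I$ is constant, so $h$ is a positive-definite quadratic with $\nabla h(x)=H^\ast(x-x^\ast)$; and since $\frac{1}{T}\int_0^T(\xi^{a}-\delta_2(x^{a},t))\,\d t=\xi^{a}-vec(H^\ast)$, the third line of \eqref{eq:crazav} vanishes at $\xi^{a}=vec(H^\ast)$, which is therefore invariant, so we may take $H^{a}=H^\ast$ and work only with the $(x^{a},v^{a})$ subsystem.

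Next I would carry out the averaging of the $v^{a}$ equation explicitly. Expanding $\delta_1(x^{a},t)=\tfrac{2}{a} h(x^{a}+aM(t))M(t)$ and using that $h$ is quadratic, the integrand $H^\ast v^{a}+\delta_1(x^{a},t)$ splits as $z^{a}$ plus three oscillatory corrections, $\tfrac{2}{a} h(x^{a})M(t)$, $2(\nabla h(x^{a})^\top M(t))M(t)-\nabla h(x^{a})$ and $a\,(M(t)^\top H^\ast M(t))M(t)$, where $z^{a}:=H^\ast v^{a}+\nabla h(x^{a})$. Using $\frac{1}{T}\int_0^T M\,\d t=0$, $\frac{1}{T}\int_0^T MM^\top\,\d t=\tfrac{1}{2}I$ and the mutual orthogonality of the dithers, each correction has zero period-average (the last being moreover $O(a)$), so $\frac{1}{T}\int_0^T(H^\ast v^{a}+\delta_1(x^{a},t))\,\d t=z^{a}+O(a)$. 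Passing to coordinates $(z^{a},g^{a})$ with $g^{a}:=\nabla h(x^{a})$, $v^{a}=(H^\ast)^{-1}(z^{a}-g^{a})$ and $\dot z^{a}=H^\ast\dot v^{a}+H^\ast\dot x^{a}$ (legitimate because the Hessian is constant), the reduced averaged system is exactly \eqref{eq:tarsys2} with $H=H^\ast$ up to the $O(a)$ residual. Lemma~\ref{lem1} then applies directly: since $H^\ast\geq\alpha_h I>0$, for $K>K^\ast$ the Lyapunov function $V_1=\tfrac{1}{2}\|z^{a}\|^2+\tfrac{1}{2}\|g^{a}\|^2$ obeys $\dot V_1\leq-2^{(2-\alpha_1)/2}k_7V_1^{(2-\alpha_1)/2}$ with exponent $(2-\alpha_1)/2<1$, yielding finite-time convergence of $(z^{a},g^{a})$ to the origin, i.e.\ of $(x^{a},v^{a})$ to $(x^\ast,0)$.

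The main obstacle is interchanging the period-average with the positively homogeneous, non-Lipschitz weight $\gamma$: in \eqref{eq:crazav} the term $\gamma(\cdot)(\cdot)$ sits inside $\frac{1}{T}\int_0^T(\cdot)\,\d t$, whereas \eqref{eq:tarsys2} has $\gamma(z)z$ outside it. One must show $\frac{1}{T}\int_0^T\gamma(z^{a}+e(t))(z^{a}+e(t))\,\d t=\gamma(z^{a})z^{a}+(\text{residual})$ even though the oscillation $e(t)$ carries the $O(1/a)$ component $\tfrac{2}{a} h(x^{a})M(t)$; because $\gamma(w)w=c_1\|w\|^{-\alpha_1}w+c_2\|w\|^{-\alpha_2}w$ is only positively homogeneous and singular at $w=0$, this relies on the symmetry $M(-t)=-M(t)$ and the frequency separation to annihilate the average of $\|M\|^{-\alpha_i}M$, together with a verification that the residual is of strictly higher order than $\|z^{a}\|^{2-\alpha_1}$ near the equilibrium, so that the finite-time estimate of Lemma~\ref{lem1} is preserved. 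In the present ``simplified'' averaged model this residual is taken to be zero, so the reduction to \eqref{eq:tarsys} is exact and Lemma~\ref{lem1} closes the proof; the genuine residual is deferred to the later averaging step, where it is absorbed as a vanishing input through the FTISS notion of Definition~\ref{def:FTISS}.
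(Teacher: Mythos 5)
Your reduction collapses at exactly the point you flag and then set aside. Lemma~\ref{lem2} is a statement about the averaged system \eqref{eq:crazav} itself, in which the weight $\gamma$ sits \emph{inside} the time average; declaring that ``the residual is taken to be zero'' so that $\frac{1}{T}\int_0^T\gamma(z^{a}+e(t))(z^{a}+e(t))\,\d t=\gamma(z^{a})z^{a}$ is not a simplification you are entitled to --- it is the entire content of the lemma. The oscillation $e(t)$ carries the $O(1/a)$ component $\tfrac{2}{a}h(x^{a})M(t)$ together with $O(1)$ terms, and $w\mapsto\gamma(w)w$ is nonlinear (and non-Lipschitz near $0$), so the average and the nonlinearity do not commute; your parity/orthogonality argument addresses the average of $\|M\|^{-\alpha_i}M$, which is not the quantity that appears --- the integrand is $\|z^{a}+e(t)\|^{-\alpha_i}(z^{a}+e(t))$ and does not split. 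Deferring the discrepancy ``to the later averaging step'' is likewise unavailable: the FTISS interconnection of Lemma~\ref{lem3} and the Krasnosel'skii--Krein step in Theorem~\ref{theo:final} both presuppose that Lemma~\ref{lem2} has already established a finite-time decrease for \eqref{eq:crazav} with the integral in place. (Your reading of Assumption~\ref{assum:cost1} as giving a constant Hessian $H^{\ast}$ is consistent with the paper and is not the issue.)

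The paper's proof avoids the interchange altogether. After the same change of coordinates $z^{a}=H^{\ast}v^{a}+\nabla h(x^{a})$, $g^{a}=\nabla h(x^{a})$ and the same Lyapunov function $V_1^{a}=\tfrac{1}{2}\|z^{a}\|^2+\tfrac{1}{2}\|g^{a}\|^2$, it keeps the averaged $\gamma$-weighted field intact and bounds the inner product
\begin{align*}
\Phi_1=(z^{a})^\top H^{\ast}\,\frac{1}{T}\int_0^T\frac{\theta(t)}{\|\theta(t)\|^{\alpha_1}}\,\d t,
\qquad \theta(t)=H^{\ast}v^{a}+\delta_1(x^{a},t),\quad \frac{1}{T}\int_0^T\theta(t)\,\d t=z^{a},
\end{align*}
from below by $\lambda_{\min}(H^{\ast})\|z^{a}\|^{2-\alpha_1}/T$ via Jensen's inequality for the concave power $s\mapsto s^{\alpha_1}$, $0<\alpha_1<1$ (Hardy--Littlewood--P\'olya, Theorem 189), with a companion estimate for the $\alpha_2<0$ term; only then does it proceed as in Lemma~\ref{lem1} to obtain $\dot V_1^{a}\leq-2^{(2-\alpha_1)/2}k_8 V_1^{a\,(2-\alpha_1)/2}$. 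If you wish to keep your route, you must supply a quantitative substitute for this step --- a lower bound on $(z^{a})^\top$ times the averaged field of the above type --- rather than assuming the residual vanishes; as written, Lemma~\ref{lem1} is being invoked on a system that is not \eqref{eq:tarsys2}.
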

\begin{proof} The proof is given in Appendix. \end{proof}

Next, we consider the stability of the averaged system \eqref{eq:crazav}. This system is viewed as the interconnection of two systems. The first system is given by the dynamics of {$x^{a}$ and $v^{a}$} while the second system consists of the Hessian update dynamics $\xi^{a}$. 

\begin{lemma}\label{lem3}
Consider the nonlinear system \eqref{eq:crazav}. Let Assumption \ref{assum:cost1} be satisfied and further assume that the Hessian update dynamics are such that the Hessian matrix $H^{a}$ corresponding to $\xi^{a}=vec(H^{a})$ is positive definite $\forall t \geq0$. Then there exists a $K^\ast$ such that, for any $K>K^\ast>0$, the optimum $x^{a}=x^\ast$, $v^{a}=0$, $\xi^{a}=vec(H^\ast)$ is a finite-time stable equilibrium of the system. 
\end{lemma}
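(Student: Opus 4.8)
\proof
The plan is to treat the averaged system \eqref{eq:crazav} as a cascade, exactly as anticipated in the discussion preceding the statement: the Hessian--estimator subsystem $\xi^{a}$ sits on top and feeds the $(x^{a},v^{a})$ subsystem through the estimation error $e(t):=\xi^{a}(t)-vec(H^\ast)$. The first step is to analyze the $\xi^{a}$ subsystem in isolation. Using the averaging approximation $\tfrac1T\int_0^T(\xi^{a}-\delta_2(x^{a},t))\,dt\approx \xi^{a}-vec(H^\ast)$, the error obeys $\dot e=-K_2\gamma(e)e$, so with $V_\xi=\tfrac12 e^\top e$ one obtains $\dot V_\xi=-K_2\big(c_1\|e\|^{2-\alpha_1}+c_2\|e\|^{2-\alpha_2}\big)\le -K_2 c_1(2V_\xi)^{(2-\alpha_1)/2}$. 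Since $(2-\alpha_1)/2<1$, $e$ reaches $0$ in a finite time $T_\xi=T_\xi(\|e(0)\|)$ that is continuous in the initial data and vanishes at the origin. Moreover $\|e(t)\|$ is nonincreasing, so $H^{a}(t)$ remains in a bounded set; together with the positive--definiteness hypothesis and continuity (compactness on $[0,T_\xi]$, equality to $H^\ast$ afterwards) this gives uniform bounds $0<\underline\alpha\le\lambda_{min}(H^{a}(t))$ and $\lambda_{max}(H^{a}(t))\le\bar\alpha$ for all $t\ge0$.

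The second step is to show that the $(x^{a},v^{a})$ subsystem, with $e$ regarded as an input, is FTISS in the sense of Definition \ref{def:FTISS}. Writing $H^{a}=H^\ast+\widetilde H(t)$ with $\widetilde H=vec^{-1}(e)$, using $g^{a}=H^\ast(x^{a}-x^\ast)$ (valid since the Hessian is constant on $\mathcal{X}$ under Assumption \ref{assum:cost1}), and introducing $z^{a}:=H^\ast v^{a}+g^{a}$, the $v^{a}$--dynamics become a perturbation of the flow analyzed in Lemma \ref{lem2}, with $H^{a}v^{a}+g^{a}=z^{a}+\widetilde H v^{a}$, i.e. the perturbation enters inside $\gamma(\cdot)$ and inside its argument. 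Repeating the Lyapunov computation of Lemma \ref{lem1} with $V_1=\tfrac12 z^{a\top}z^{a}+\tfrac12 g^{a\top}g^{a}$ and splitting every perturbed term with Young's inequality --- as in the bounds on $\phi$ and $\beta$ there, but now with the uniform constants $\underline\alpha,\bar\alpha$ --- yields, for $K>K^\ast$ chosen so that the negative terms carrying the factor $K$ dominate both the $\gamma$--cross terms and the terms depending on $\|\widetilde H\|$, an estimate $\dot V_1\le -c\,V_1^{(2-\alpha_1)/2}+\sigma(\|e\|_\infty)$ with $\sigma$ of class $\mathcal{K}$. By the comparison and FTISS--Lyapunov arguments of \cite{hong2010FTISS} this furnishes $\|(x^{a}-x^\ast,v^{a})\|\le \beta\big(\|(x^{a}(0)-x^\ast,v^{a}(0))\|,t\big)+\alpha(\|e\|_\infty)$ for a generalized $\mathcal{KL}$ function $\beta$ vanishing after a finite time.

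The third step closes the argument via the cascade property. Because $e(t)\equiv0$ for $t\ge T_\xi$, the input to the $(x^{a},v^{a})$ subsystem is eventually zero, so on $[T_\xi,\infty)$ the subsystem coincides with the one in Lemma \ref{lem2} and hence converges to $(x^\ast,0)$ in an additional finite time $T_{xv}$ depending continuously on $x^{a}(T_\xi),v^{a}(T_\xi)$, which are bounded by the FTISS estimate of the second step; on $[0,T_\xi]$ that same estimate rules out finite escape. Summing, the total settling time is bounded by $T_\xi(\|e(0)\|)+T_{xv}(\cdot)$, continuous in the initial data and zero at the origin, and Lyapunov stability of the full equilibrium is inherited from the $\mathcal{KL}$--plus--$\mathcal{K}$ estimate together with the $V_\xi$ bound. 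This shows $x^{a}=x^\ast$, $v^{a}=0$, $\xi^{a}=vec(H^\ast)$ is a finite-time stable equilibrium of \eqref{eq:crazav}, with $K^\ast$ the threshold produced in the second step.

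The main obstacle is the second step. The Hessian error enters the dynamics nonlinearly through the fractional--power scaling $\gamma$, so separating the perturbation contributions must be done with the same care as the $\phi$/$\beta$ estimates in the proof of Lemma \ref{lem1}, and one must verify that a single choice of $K$ simultaneously dominates the $\gamma$--cross terms, the indefinite terms, and every perturbation--induced term --- which is exactly what $K^\ast$ encodes --- while leaving the bound in the precise additive form demanded by Definition \ref{def:FTISS}.
\endproof
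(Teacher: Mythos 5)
Your overall architecture coincides with the paper's: the same cascade decomposition (the Hessian-error subsystem shown globally finite-time stable with $V=\tfrac12\|\tilde{\xi}^{a}\|^2$, the Newton subsystem shown FTISS with respect to that error with $V=\tfrac12\|z^{a}\|^2+\tfrac12\|g^{a}\|^2$ and $K$ large), and the same final appeal to the FTISS-interconnection result of \cite{hong2010FTISS}; your explicit settling-time summation is just an unpacked version of that citation, and your additive estimate $\dot V_1\le -c\,V_1^{(2-\alpha_1)/2}+\sigma(\|e\|_\infty)$ plays the role of the paper's implication-form condition \eqref{eq:ineqf}. Both forms are acceptable Lyapunov characterizations of FTISS.

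The substantive divergence is the coordinate change, and it is exactly where your sketch understates the work. The paper takes $z^{a}=H^{a}v^{a}+\nabla h(x^{a})$ (estimated Hessian), so the averaged decay term is aligned with $z^{a}$ and the Jensen-type lower bounds of Lemma \ref{lem2} apply verbatim; the price is the extra terms $\dot{H}^{a}v^{a}$ and the $\tilde{H}^{a}$ cross terms, but these sit \emph{outside} $\gamma$ and are controlled through $\|\dot{H}^{a}\|\le\phi(\|\tilde{\xi}^{a}\|)$ with $\phi$ class $\mathcal{K}$, so Young's inequality as in Lemma \ref{lem1} absorbs them. You take $z^{a}=H^{\ast}v^{a}+g^{a}$, which eliminates the $\dot{H}^{a}$ term but pushes the error \emph{inside} the argument of $\gamma$: the decay term is aligned with $z^{a}+\widetilde{H}v^{a}$, not with $z^{a}$. ``Repeating the Lyapunov computation of Lemma \ref{lem1}'' does not cover this, because Lemma \ref{lem1} never compares $\gamma(u)u$ at shifted arguments; you need an additional estimate, e.g.\ $\|z^{a}\|^{2-\alpha_i}\le 2^{1-\alpha_i}\left(\|z^{a}+\widetilde{H}v^{a}\|^{2-\alpha_i}+\|\widetilde{H}v^{a}\|^{2-\alpha_i}\right)$ combined with $\|\widetilde{H}v^{a}\|\le\|\widetilde{H}\|\,\|z^{a}-g^{a}\|/\lambda_{min}(H^{\ast})$, to recover the negative powers of $\|z^{a}\|$ up to perturbations that are class-$\mathcal{K}$ in $\|e\|$ times powers of the state, which can then be dominated as you describe. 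With that supplement (or by simply adopting the paper's coordinates) your second step goes through. A smaller gloss of the same kind is your claim that the error obeys $\dot e=-K_2\gamma(e)e$: the averaged dynamics retain the integral of $\gamma$ evaluated pointwise, and the decay bound requires the Jensen argument of Lemma \ref{lem2}, which is how the paper obtains it.
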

\begin{proof}
We first consider the averaged Hessian update dynamics and define the Hessian estimation error $\tilde{\xi}^{a} = \xi^{a}-vec(H^\ast)$. The error dynamics take the form:
{ \begin{align*}
\dot{\tilde{\xi}}^{a}=&-\frac{K_2}{T}  \int_0^T \bigg(  \frac{c_1 (\xi^{a} - \delta_2(x^{a},t))}{\|\xi^{a} - \delta_2(x^{a},t))\|^{\alpha_{1}}}+ \frac{c_2 (\xi^{a} - \delta_2(x^{a},t))}{\|\xi^{a} - \delta_2(x^{a},t))\|^{\alpha_{2}}} \bigg) dt. 
\end{align*}}
 We pose the Lyapunov function candidate: $V_2^{a}=\frac{1}{2} \| \tilde{\xi}^{a} \|^2$. Its derivative is given by:
\begin{align*}
\dot{V}_2^{a} =& -\frac{K_2}{T} \tilde{\xi}^{a^{T}} \int_0^T \bigg(  \frac{c_1 (\xi^{a} - \delta_2(x^{a},t))}{\|\xi^{a} - \delta_2(x^{a},t))\|^{\alpha_{1}}}+ \frac{c_2 (\xi^{a} - \delta_2(x^{a},t))}{\|\xi^{a} - \delta_2(x^{a},t))\|^{\alpha_{2}}} \bigg) dt.
\end{align*}
It is easy to show that: $\tilde{\xi}^{a} = \frac{1}{T} \int_0^T (\xi^{a} - \delta_2(x^{a},t)))dt.$ As a result, we can proceed as in Lemma \ref{lem2} and show that there exists a positive constant $k_9>0$ such that:
\begin{align*}
\dot{V_2}^{a} \leq -2^{(2-\alpha_1)/2} k_9 V_2^{a^{(2-\alpha_1)/2}}.
\end{align*}
Therefore, the averaged Hessian estimation update has a globally finite-time stable equilibrium at $\xi^{a}=vec(H^\ast)$.  

Now we consider the Newton seeking algorithm {\eqref{eq:crazav}} subject to the coordinate transformation $z^{a}=H^{a} v^{a}+\nabla h(x^{a})$ and $g^{a}=\nabla h(x^{a})$. This yields the averaged dynamics: 
\begin{equation}\label{eq:crazav3}
\begin{aligned}
\dfrac{\d g^{a}}{\d t} =& \,{ k}H^{\ast} \gamma(v^{a}) (v^{a}) \\
\dfrac{\d z^{a}}{\d t}=&\,  -\frac{K}{T}{H^a} \int_t^{t+T} \bigg( \frac{c_1 (H^{a} v^{a}+ \delta_1(x^{a},t))}{\| H^{a} v^{a}+ \delta_1(x^{a},t)\|^{\alpha_{1}}} + \frac{c_2 (H^{a} v^{a}+ \delta_1(x^{a},t))}{\|H^{a} v^{a}+ \delta_1(x^{a},t)\|^{\alpha_{2}}} \bigg) dt   +{k}\gamma(v^{a})H^\ast  v^{a}+ \dot{H}^{a} v^{a}.
\end{aligned}
\end{equation}
Under the simplifying assumption that $H^{a}$ is nonsingular for all $t\geq 0$, one can write $v^{a}={H^{a}}^{-1}(z^{a}-g^{a})$.  We pose the Lyapunov function candidate $V_3^{a}=\frac{1}{2} \| z^{a}\|^2 + \frac{1}{2} \|g^{a}\|^2$. Its derivative is given by:
\begin{align*}
\dot{V}_3^{a} =& -\frac{K}{T} {z^{a}}^\top H^{a} \int_t^{t+T} \bigg(  \frac{c_1 (H^{a} v^{a} + \delta_1(x^{a},t))}{\|H^{a}v^{a} + \delta_1(x^{a},t))\|^{\alpha_{1}}}+ \frac{c_2 (H^{a} v^{a} + \delta_1(x^{a},t))}{\|H^{a} v^{a} + \delta_1(x^{a},t))\|^{\alpha_{2}}} \bigg) dt \\
&+ k {(z^{a}+g^{a})}^\top {H^a} \gamma(v^{a}) v^{a}+ {z^{a}}^{\top} \dot{H}^{a} v^{a} 
\end{align*}
The dynamics of the Hessian update are written as:
\begin{align*}
\dot{H}^{a} = -&K_2 \frac{1}{T} \int_0^T \frac{c_1 (H^{a}-h(x+aM(t))S(t))}{\| \xi^{a} - \delta_2(x^{a},t) \|^{\alpha_{1}}} dt -K_2 \frac{1}{T} \int_0^T \frac{ c_2(H^{a}-h(x+aM(t))S(t))}{\| \xi^{a} - \delta_2(x^{a},t) \|^{\alpha_{2}}} dt 
\end{align*}
If one considers the first term, we can provide the following bound:
\begin{align*}
 c_1 K_2 &\frac{1}{T} \int_0^T \frac{ \|\xi^{a}-\delta_2(x^{a},t)\| }{\| \xi^{a} - \delta_2(x^{a},t) \|^{\alpha_{1}}} dt 
\leq \frac{c_1 K_2}{T^{\alpha_1}} \bigg(\frac{1}{T}  \int_0^T \|\xi^{a}-\delta_2(x^{a},t)\|dt \bigg)^{1-\alpha_1}= \frac{c_1 K_2}{T^{\alpha_1}}\|\tilde{\xi}^{a}\|^{1-\alpha_1}
\end{align*}
Using the same approach for the second term, one obtains:
\begin{align*}
\|\dot{H}^{a}\| \leq \frac{K_2 c_1}{T^{\alpha_1}} \|\tilde{\xi}^{a}\|^{1-\alpha_1}+ \frac{K_2 c_2}{T^{\alpha_2}} \|\tilde{\xi}^{a}\|^{1-\alpha_2} =\phi(\|\tilde{\xi}^{a}\|)
\end{align*}
where $\phi$ is a class $\mathcal{K}^\infty$ function. 

%
We can then write the following upper bound for $\dot{V}_3^{a}$, 
\begin{align*}
\dot{V}_3^{a} \leq& -\frac{K}{T} {z^{a}}^\top  H^{a} \int_t^{t+T} \bigg(  \frac{c_1 (H^{a} v^{a} + \delta_1(x^{a},t))}{\|H^{a}v^{a} + \delta_1(x^{a},t))\|^{\alpha_{1}}}+ \frac{c_2 (H^{a} v^{a} + \delta_1(x^{a},t))}{\|H^{a} v^{a} + \delta_1(x^{a},t))\|^{\alpha_{2}}} \bigg) dt \\
&+ k {z^{a}}^\top {H^\ast} \gamma(v^{a}) v^{a} + k g^{a^{T}} {H^\ast} \gamma(v^{a})v^{a} + \|z^{a} \| \|v ^{a} \| \phi(\|\tilde{\xi}^{a} \|).
\end{align*}
Next, we define $\tilde{H}^a=H^\ast-H^a$ and rewrite the last inequality
\begin{align*}
\dot{V}_3^{a} \leq& -\frac{K}{T} (z^{a})^\top  H^{a} \int_t^{t+T} \bigg(  \frac{c_1 (H^{a} v^{a} + \delta_1(x^{a},t))}{\|H^{a}v^{a} + \delta_1(x^{a},t))\|^{\alpha_{1}}}+ \frac{c_2 (H^{a} v^{a} + \delta_1(x^{a},t))}{\|H^{a} v^{a} + \delta_1(x^{a},t))\|^{\alpha_{2}}} \bigg) dt \\
&+ k {z^a}^\top \tilde{H}^a \gamma(v^a)v^a+k {g^a}^\top \tilde{H}^a \gamma(v^a)v^a  + k {z^{a}}^\top {H^a} \gamma(v^{a}) v^{a} + k {g^{a}}^\top {H^a} \gamma(v^{a})v^{a} \\&+ \|z^{a} \| \|v ^{a} \| \phi(\|\tilde{\xi}^{a} \|).
\end{align*}

Proceeding as in Lemma \ref{lem2}, we obtain:
\begin{align*}
\dot{V}_3^{a} \leq& -K \lambda_{min}(H^{a}) \left(c_1\frac{\|z^{a}\|^{2-\alpha_1}}{T} + c_2\frac{\|z^{a}\|^{2-\alpha_2}}{T^{\alpha_2}}\right)+ k {z^a}^\top \tilde{H}^a \gamma(v^a)v^a+k {g^a}^\top \tilde{H}^a \gamma(v^a)v^a  \\
&+ k {z^{a}}^\top {H^a} \gamma(v^{a}) v^{a} + k {g^{a}}^\top {H^a} \gamma(v^{a})v^{a}+ \|z^{a} \| \|v ^{a} \| \phi(\|\tilde{\xi}^{a} \|).
\end{align*}

%
Upon substitution of $v^{a}=H^{a^{-1}}(z^{a}-g^{a})$, we obtain:
\begin{align*}
\dot{V}_3^{a} &\leq -K \lambda_{min}(H^{a}) \left(c_1 \frac{\|z^{a}\|^2}{T\|z^{a}\|^{\alpha_1}} + c_2 \frac{\|z^{a}\|^2}{T^{\alpha_2}\|z^{a}\|^{\alpha_2}}\right)+2 k {z^{a}}^\top \gamma(H^{a^{-1}}(z^{a}-g^{a})) (z^{a}-g^{a})\\
&- k (z^{a}-g^{a})^{\top} \gamma(H^{a^{-1}}(z^{a}-g^{a})) (z^{a}-g^{a})+ k (z^{a}+g^a)^\top \tilde{H}^{a} \gamma(H^{a^{-1}}(z^{a}-g^{a})) H^{a^{-1}}(z^{a}-g^{a}) \\
&+ \|z^{a} \| \|H^{a^{-1}}(z^{a}-g^{a}) \| \phi(\|\tilde{\xi}^{a} \|).
\end{align*}
%
%
Next, we proceed as in Lemma \ref{lem1} to bound all indeterminate terms in the last inequality. This gives the following inequality:
\begingroup
\allowdisplaybreaks
\begin{align*}
&\dot{V}_3^{a} \leq -\frac{K}{T} c_1\lambda_{min}(H^{a})\|z^{a}\|^{2-\alpha_1} -\frac{K}{T^{\alpha_2}} c_2 \lambda_{min}(H^{a}) \| z^{a}\|^{2-\alpha_2}  \\
&-k c_1 \lambda_{min}(H^{a})^{\alpha_{1}}  \| z^{a}-g^{a}\|^{2-\alpha_1}- k c_2 \lambda_{max}(H^{a})^{\alpha_{2}} \| z^{a}-g^{a}\|^{2-\alpha_2} \\
&+2 k c_1 \dfrac{p_1}{2-\alpha_1}  \lambda_{max}(H^{a})^{2-\alpha_{1}} \|z^{a}\|^{2-\alpha_1}  + \dfrac{2 kc_1(1-\alpha_1)}{2-\alpha_1} p_1^{-1/(1-\alpha_{1})} \lambda_{min}(H^{a})^{\alpha_1-2} \|z^{a}-g^{a}\|^{2-\alpha_1} \\
 &+2k c_2 \dfrac{p_2}{2-\alpha_2}  \lambda_{max}(H^{a})^{2-\alpha_2} \|z^{a}\|^{2-\alpha_2}  + \dfrac{2k c_2(1-\alpha_2)}{2-\alpha_2} p_2^{-1/(1-\alpha_{2})} \lambda_{min}(H^{a})^{\alpha_2-2} \|z^{a}-g^{a}\|^{2-\alpha_2} \\ 
 &+ k \|z^{a}+g^a\| \|\tilde{\xi}^{a}\| \gamma(H^{a^{-1}}(z^{a}-g^{a})) \lambda_{max}(H^{a^{-1}})\|z^{a}-g^{a}\| \\
&+ \|z^{a} \| \|H^{a^{-1}}(z^{a}-g^{a}) \| \phi(\|\tilde{\xi}^{a} \|).
\end{align*}
\endgroup
We must bound the second last term of the last inequality. We first note that: $\|z^a+g^a\|\leq 2\|z^a\| + \|z^a-g^a\|$. This yields:
\begin{equation} \label{eq:ineqc}
\begin{aligned}
 k \|z^{a}+g^a\| \|\tilde{\xi}^{a}\| \gamma(H^{a^{-1}}(z^{a}-g^{a})) \lambda_{max}(H^{a^{-1}})\|z^{a}-g^{a}\| & \leq 2 k \|z^a\|  \|\tilde{\xi}^{a}\| \gamma(H^{a^{-1}}(z^{a}-g^{a})) \lambda_{max}(H^{a^{-1}})\|z^{a}-g^{a}\| \\
&+k \|\tilde{\xi}^{a}\| \gamma(H^{a^{-1}}(z^{a}-g^{a})) \lambda_{max}(H^{a^{-1}})\|z^{a}-g^{a}\|^2.
\end{aligned}
\end{equation}
Using the approach of Lemma \ref{lem1}, we apply Young's inequality to bound the first term on the right hand side of \eqref{eq:ineqc} as follows:
\begin{align*}
2 k \|z^a\|  \|\tilde{\xi}^{a}\| \gamma(H^{a^{-1}}(z^{a}-g^{a})) \lambda_{max}(H^{a^{-1}})\|z^{a}-g^{a}\| \leq & \|\tilde{\xi}^{a}\| \bigg( 2 k c_1 \dfrac{m_1}{2-\alpha_1}  \lambda_{max}(H^{a})^{2-\alpha_{1}} \|z^{a}\|^{2-\alpha_1} \\
 & \hspace{0.05in} + \dfrac{2 kc_1(1-\alpha_1)}{2-\alpha_1} m_1^{-1/(1-\alpha_{1})} \lambda_{min}(H^{a})^{\alpha_1-2} \|z^{a}-g^{a}\|^{2-\alpha_1} \\
 &+2k c_2 \dfrac{m_2}{2-\alpha_2}  \lambda_{max}(H^{a})^{2-\alpha_2} \|z^{a}\|^{2-\alpha_2} \\
 & \hspace{0.05in} + \dfrac{2k c_2(1-\alpha_2)}{2-\alpha_2} m_2^{-1/(1-\alpha_{2})} \lambda_{min}(H^{a})^{\alpha_2-2} \|z^{a}-g^{a}\|^{2-\alpha_2}  \bigg)
 \end{align*}
 for some positive constants $m_1>0$ and $m_2>0$. As a result, one can find a constant $M_1$ such that:
 \begin{align*}
&2 k \|z^a\|  \|\tilde{\xi}^{a}\| \gamma(H^{a^{-1}}(z^{a}-g^{a})) \lambda_{max}(H^{a^{-1}})\|z^{a}-g^{a}\| \leq  \|\tilde{\xi}^{a}\| M_1 \bigg(  \|z^{a}\|^{2} + \|z^{a}-g^{a}\|^2 \bigg).
 \end{align*}
Next we bound the second term of \eqref{eq:ineqc}. Using the development above, we can write:
\begin{align*}
k \|\tilde{\xi}^{a}\| \gamma(H^{a^{-1}}(z^{a}-g^{a})) \lambda_{max}(H^{a^{-1}})\|z^{a}-g^{a}\|^2 \leq k& \|\tilde{\xi}^{a}\|  \lambda_{max}(H^{a^{-1}}) \bigg( c_1  \lambda_{max}(H^{a})^{\alpha_{1}} \| z^{a}-g^{a}\|^{2-\alpha_{1}} \\
&+ c_2  \lambda_{min}(H^{a})^{\alpha_{2}} \| z^{a}-g^{a}\|^{2-\alpha_{2}}\bigg).
\end{align*}
It follows that there exists a constant $M_2$ such that:
\begin{align*}
k \|\tilde{\xi}^{a}\| \gamma(H^{a^{-1}}(z^{a}-g^{a})) \lambda_{max}(H^{a^{-1}})\|z^{a}-g^{a}\|^2 \leq  \|\tilde{\xi}^{a}\| M_2 \| z^{a}-g^{a}\|^{2}.
\end{align*}
Finally, we consider the indeterminate term: $\|z^{a} \| \|H^{a^{-1}}(z^{a}-g^{a}) \| \phi(\|\tilde{\xi}^{a} \|)$. It can be bounded as:
\begin{align*}
\|z^{a} \| \|H^{a^{-1}}(z^{a}-g^{a})  &\| \phi(\|\tilde{\xi}^{a} \|) \leq \frac{ \phi(\|\tilde{\xi}^{a} \|)}{2 \lambda_{min}(H^{a})} ( \|z^a\|^2+\|z^a-g^a\|^2).
\end{align*}

Next, we pick $p_1$ and $p_2$ such that
 $$
 p_1 > 2\bigg( \dfrac{1}{\lambda_{min}(H^{a})^2} \dfrac{1-\alpha_1}{2-\alpha_1} \bigg)^{1-\alpha_{1}},
 $$
 and 
 $$
 p_2> 2\bigg( \dfrac{1}{\lambda_{min}(H^{a})^2} \dfrac{1-\alpha_2}{2-\alpha_2} \bigg)^{1-\alpha_{2}}.
$$
If one picks $K$ to be sufficiently large
\begin{align*}
K> 2 k  \max\bigg[&\dfrac{p_1T (\lambda_{max}(H^{a})^{2-\alpha_1}}{\lambda_{min}(H^{a})},  \dfrac{p_2T^{\alpha_2} (\lambda_{max}(H^{a})^{2-\alpha_2}}{\lambda_{max}(H^{a})}\bigg]
\end{align*}
then there exist positive constants $p_3$, $p_4$, $p_5$ and $p_6$ such that:
\begin{align*}
\dot{V}_3^{a} \leq &-p_3 \|z^{a}\|^{2-\alpha_1} -p_5 \| z^{a}\|^{2-\alpha_2}  -p_4  \| z^{a}-g^{a}\|^{2-\alpha_1}-  p_6 \| z^{a}-g^{a}\|^{2-\alpha_2} \\
&+ \|\tilde{\xi}^{a}\| M_1 \bigg(  \|z^{a}\|^{2} + \|z^{a}-g^{a}\|^2 \bigg) + \|\tilde{\xi}^{a}\| M_2 \| z^{a}-g^{a}\|^{2}  + \frac{ \phi(\|\tilde{\xi}^{a} \|)}{2\lambda_{min}(H^{a})} ( \|z^a\|^2+\|z^a-g^a\|^2).
\end{align*}
We can then let $M_3=M_1+M_2$ and write:
\begin{align*}
\dot{V}_3^{a} \leq &-p_3 \|z^{a}\|^{2-\alpha_1} -p_5 \| z^{a}\|^{2-\alpha_2} -p_4  \| z^{a}-g^{a}\|^{2-\alpha_1}-  p_6 \| z^{a}-g^{a}\|^{2-\alpha_2} \\
&+\bigg(  \|\tilde{\xi}^{a}\| M_3+ \frac{ \phi(\|\tilde{\xi}^{a} \|)}{2\lambda_{min}(H^{a})}\bigg) (  \|z^{a}\|^{2} + \|z^{a}-g^{a}\|^2 ).
\end{align*}

It is then easy to show that there exists constants $\bar{p}_3$, $\bar{p}_4$ such that:
\begin{align*}
\dot{V}_3^{a} &\leq -\bar{p}_3 \|z^{a}\|^{2-\alpha_1} -\bar{p}_4  \| z^{a}-g^{a}\|^{2-\alpha_1}
\end{align*}
for any $\|\tilde{\xi}^{a}\|$ such that: 
\begin{equation}\label{eq:ineqf}
\begin{aligned} 
\|z^{a}\|^{-\alpha_2}+&\|z^{a}-g^{a}\|^{-\alpha_2}  \geq \frac{\frac{\phi(\|\tilde{\xi}^{a}\|)}{2\lambda_{min}(H^{a})}+M_3 \|\tilde{\xi}^a\|}{\min([p_5,p_6])}.
\end{aligned}
\end{equation}

We can proceed as before to show that there exists a positive constant $p_7$ such that:
\begin{align*}
\dot{V}_3^{a} &\leq -2^{(2-\alpha_1)/2}p_7 V_3^{a^{(2-\alpha_1)/2}}
\end{align*}
subject to \eqref{eq:ineqf}. Thus, the system is the interconnection of an FTISS system and a globally finite-time stable system. Following \cite{hong2010FTISS}, we conclude that the resulting system has a finite-time stable equilibrium at $z^{a}=0$, $g^{a}=0$ and $\tilde{\xi}^{a}=0$. This completes the proof.
\end{proof}

The main result of the previous analysis is that the averaged system (\ref{eq:crazav}) recovers the performance of the proposed target Newton seeking system (\ref{eq:tarsys2}). It remains to prove that the trajectories of the Newton seeking system (\ref{eq:ftNS}) are close to the trajectories of the target system. The main challenge in this analysis is that the dynamics of the Newton seeking system are not Lipschitz. As a result, standard averaging results cannot be applied. 

As in the analysis of the finite-time ESC reported in \cite{guayFT2}, we apply a classical averaging theorem due to Krasnosel'skii-Krein \cite{krasnosel1955principle}. This theorem can be used to show the closeness of solution of the nominal system and the averaged system over a compact set $D\subset \mathbb{R}^2$ as $a \rightarrow 0$.  

The theorem can be stated as follows.

\begin{theorem}\label{theo:KKT}\cite{krasnosel1955principle}
Consider the nonlinear system $\dot{X}=f(t,X,\epsilon)$ where,
\begin{enumerate}
\item the map $f(t,X,\epsilon)$ is continuous in $t$ and $X$ on $\mathbb{R}_{\geq 0} \times \mathbb{R}^n$,
\item there exists a positive constant $L>0$ and a compact set $D \subset \mathbb{R}^n$ such that $\|f(t,X,\epsilon)\| \leq L$ for $t\in \mathbb{R}_{\geq 0}$, $X\in D$ and $\epsilon \in [0,\epsilon^\ast]$,
\item the averaged system 
\begin{align*}
\dot{X}^{a}= \lim_{T \rightarrow \infty} \frac{1}{T} \int_0^T f(t,X^{a},0) dt,\;X^{a}(0)=X(0),
\end{align*}
exists with solutions defined on the set $D$. 
\end{enumerate}
Then, for any $\epsilon\leq \epsilon^{\ast}$, there exists constants $\delta$ and $\overline{T}$ such that:
\begin{align*}
\| X(t)- X^{a}\| \leq \delta
\end{align*}
for $t \in [0,\overline{T}]$.
\end{theorem}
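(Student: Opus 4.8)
\emph{Proof plan.} The plan is to follow the classical Krasnosel'skii--Krein argument, in which the Gronwall/Lipschitz machinery of standard averaging is replaced by a compactness argument, precisely because hypothesis~1 asks only for continuity of $f$. First I would fix a finite horizon $\overline{T}>0$ small enough that, thanks to the uniform bound $\|f(t,X,\epsilon)\|\le L$ of hypothesis~2, every solution $X(\cdot,\epsilon)$ with $X(0)=X_0$ stays inside a fixed compact set $\bar D\subset\mathbb{R}^n$ on $[0,\overline{T}]$; a one-step bootstrap (the trajectory cannot leave the ball of radius $L\overline{T}$ about $X_0$) makes this horizon independent of $\epsilon\in[0,\epsilon^{*}]$. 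On $[0,\overline{T}]$ the family $\{X(\cdot,\epsilon)\}$ is then uniformly bounded and equi-Lipschitz with constant $L$, hence precompact in $C([0,\overline{T}];\mathbb{R}^n)$ by Arzel\`a--Ascoli, and $X^{a}$ is Lipschitz with the same constant. I would prove the quantitative statement that $\sup_{t\in[0,\overline{T}]}\|X(t,\epsilon)-X^{a}(t)\|\to 0$ as $\epsilon\to 0$ (which in particular yields the asserted $\delta$, $\overline{T}$ for any prescribed tolerance $\delta>0$) by contradiction: were it to fail, there would be $\delta_0>0$ and $\epsilon_k\downarrow 0$ with the supremum exceeding $\delta_0$, and along a subsequence $X(\cdot,\epsilon_k)$ would converge uniformly to some $Y\in C([0,\overline{T}];\mathbb{R}^n)$ with $Y(0)=X_0$.

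The core step is to identify $Y$ as a solution of the averaged initial value problem. Starting from the integral form $X(t,\epsilon_k)=X_0+\int_0^t f(s,X(s,\epsilon_k),\epsilon_k)\,ds$, I would pass to the limit in two stages. First, since $f$ is continuous on the compact set $[0,\overline{T}]\times\bar D\times[0,\epsilon^{*}]$ it is uniformly continuous there, so the uniform convergences $X(\cdot,\epsilon_k)\to Y$ and $\epsilon_k\to 0$ give $\sup_{s\in[0,\overline{T}]}\|f(s,X(s,\epsilon_k),\epsilon_k)-f(s,Y(s),0)\|\to 0$; this is where uniform continuity substitutes for the missing Lipschitz condition. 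Second --- the genuine averaging content --- I would show $\int_0^t f(s,Y(s),0)\,ds=\int_0^t f^{a}(Y(s))\,ds$, where $f^{a}(\cdot)=\lim_{T\to\infty}\frac1T\int_0^T f(t,\cdot,0)\,dt$ is the averaged field of hypothesis~3, by a Bogoliubov-type partition argument: chop $[0,t]$ into short subintervals on which the Lipschitz function $Y$ varies by less than a prescribed $\eta$, freeze $Y$ at the left endpoint of each piece, invoke the averaging limit on that piece, sum, and absorb the $\eta$-errors using uniform continuity of $f$ in its second argument. Combining the two stages gives $Y(t)=X_0+\int_0^t f^{a}(Y(s))\,ds$, i.e. $Y$ solves the averaged equation with $Y(0)=X_0$.

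Finally, under the standing hypothesis~3 that the averaged system has a solution $X^{a}$ on $D$ with $X^{a}(0)=X_0$, the limit $Y$ coincides with $X^{a}$ on $[0,\overline{T}]$ --- immediately if that solution is unique, and otherwise the conclusion is read as closeness to a solution of the averaged system, which $Y$ is. Either way, $X(\cdot,\epsilon_k)\to Y=X^{a}$ uniformly contradicts $\sup_t\|X(t,\epsilon_k)-X^{a}(t)\|\ge\delta_0$, proving the theorem. I expect the main obstacle to be the second stage of the limit identification: without Lipschitz continuity Gronwall is unavailable, so the slow-variation/partition estimate must be done carefully, and one must check that the averaging limit of hypothesis~3 is attained uniformly enough in the frozen base point for the partition errors to be controlled uniformly over $\bar D$. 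A secondary technical point, handled by hypothesis~2, is the $\epsilon$-uniformity of the horizon $\overline{T}$.
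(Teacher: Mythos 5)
The paper does not actually prove Theorem \ref{theo:KKT}: it is a paraphrase of the classical result of Krasnosel'skii and Krein \cite{krasnosel1955principle}, cited and used as a black box, so there is no in-paper argument to compare yours against. Your skeleton --- the uniform bound $L$ giving an $\epsilon$-independent horizon and an equi-Lipschitz solution family, Arzel\`a--Ascoli, a contradiction argument, and identification of the uniform limit as a solution of the averaged initial value problem --- is indeed the standard route for averaging with merely continuous right-hand sides, and the two caveats you flag (uniformity of the average in the frozen base point, and uniqueness of the averaged solution, which is genuinely delicate in this paper because the averaged field is non-Lipschitz at the equilibrium) are the right ones to worry about.

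There is, however, a concrete gap in your core ``two-stage'' limit identification. In stage one you send $\epsilon_k\to 0$ inside the integrand to replace $f(s,X(s,\epsilon_k),\epsilon_k)$ by $f(s,Y(s),0)$ (note in passing that hypothesis~1 as stated gives continuity only in $(t,X)$, not in $\epsilon$); in stage two you then claim $\int_0^t f(s,Y(s),0)\,ds=\int_0^t f^{a}(Y(s))\,ds$ for each fixed finite $t$. That identity is false in general: the existence of the infinite-horizon mean $f^{a}(x)=\lim_{T\to\infty}\frac1T\int_0^T f(t,x,0)\,dt$ says nothing about finite-time integrals of the fixed function $f(\cdot,\cdot,0)$, and no partition argument can manufacture it, since on a fixed window $[0,\overline{T}]$ the oscillation of $f(\cdot,x,0)$ need not average out at all. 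In the Krasnosel'skii--Krein setting the small parameter is a time scale: the system is $\dot x=\epsilon F(t,x)$, equivalently in slow time $\dot X=F(\tau/\epsilon,X)$, and closeness is asserted on intervals of length $O(1/\epsilon)$. The Bogoliubov partition argument must therefore be applied to the $\epsilon_k$-dependent, increasingly fast-oscillating integrand --- freeze $Y$ on each subinterval and use that the finite-window average of $F(\cdot/\epsilon_k,x)$ tends to $f^{a}(x)$ as $\epsilon_k\to 0$, uniformly for $x$ in the compact set --- rather than first passing to the $\epsilon=0$ integrand and then trying to average it. So the two limits in your plan must be interleaved, and the time-scale structure that the paper's loose paraphrase suppresses (its conclusion, asking only for \emph{some} $\delta$ and $\overline{T}$, is nearly vacuous as written) has to be restored before the argument closes; this looseness is presumably why the paper cites the theorem instead of proving it.
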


We can now state the final result of this study. 

\begin{theorem} \label{theo:final}
Consider the finite-time Newton seeking system \eqref{eq:ftNS}. Let Assumption \ref{assum:cost1} be satisfied. Then there exists an $a^\ast$ such that for all $a \in (0,a^\ast]$, the optimum $x=x^\ast$, $v=0$, $\xi=vec(H^\ast)$ is a semi-globally practically finite-time stable equilibrium of system \eqref{eq:ftNS}.
\end{theorem}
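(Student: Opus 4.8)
The plan is to transport the finite-time stability of the averaged system (Lemma \ref{lem3}) back to \eqref{eq:ftNS} via the Krasnosel'skii--Krein averaging theorem (Theorem \ref{theo:KKT}), with the dither amplitude $a$ playing the role of the small parameter $\epsilon$, and then to upgrade the resulting finite-horizon closeness estimate to semi-global practical finite-time stability by a restart argument. First I would write the closed loop in the standard form $\dot X=f(t,X,a)$ with $X=(x,v,\xi)$, the dither entering only through $\delta_1(x,t)$ and $\delta_2(x,t)$. Using the expansions recorded before Lemma \ref{lem2} --- in particular $\tfrac1T\int_0^T(Hv+\delta_1)\,dt\approx Hv+\nabla h(x)$ and $\tfrac1T\int_0^T(\xi-\delta_2)\,dt\approx\xi-vec(H^\ast)$, valid because the sinusoidal dithers are mutually orthogonal so that the nominally non-vanishing ``DC'' contributions of $\delta_1,\delta_2$ average out --- one identifies the infinite-horizon average $\dot X^a=\lim_{T\to\infty}\tfrac1T\int_0^T f(t,X^a,0)\,dt$ with the averaged Newton-seeking system \eqref{eq:crazav}, equivalently \eqref{eq:crazav3} in the coordinates $(z^a,g^a,\tilde\xi^a)$.

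Fix an arbitrary compact set $\mathcal X_0$ of initial conditions, with the initial Hessian estimates taken close enough to $vec(H^\ast)$ that the ball of radius $\|\tilde\xi^a(0)\|$ about $vec(H^\ast)$ consists of positive definite matrices; since $V_2^a=\tfrac12\|\tilde\xi^a\|^2$ is nonincreasing along the averaged Hessian dynamics (Lemma \ref{lem3}), $H^a(t)$ then stays positive definite for all $t\ge0$, so Lemma \ref{lem3} yields $\|X^a(t)-X^\ast\|\le\beta(\|X^a(0)-X^\ast\|,t)$ for a generalized $\mathcal{KL}$ function $\beta$ with $\beta(r,t)\equiv0$ for $t\ge\bar T(r)$, $\bar T$ continuous and $\bar T(0)=0$, where $X^\ast=(x^\ast,0,vec(H^\ast))$. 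Because $V_2^a$ and $V_3^a$ are nonincreasing, averaged trajectories from $\mathcal X_0$ stay in a compact set $D$ for all $t\ge0$ and reach $X^\ast$ by a common time $\bar T_0<\infty$. I would then verify the hypotheses of Theorem \ref{theo:KKT} on $D$: $f$ is continuous in $(t,X)$ --- the maps $\gamma(v)v$ and $\gamma(Hv+\delta_1)(Hv+\delta_1)$ are continuous even on the non-Lipschitz set $v=0$ (resp. $Hv+\delta_1=0$), since $1-\alpha_1\in(0,1)$ and $1-\alpha_2>1$ force these products to vanish there; the averaged system exists on $D$ by the previous step; and $\|f(t,X,a)\|\le L$ must hold uniformly for $X\in D$ and $a\in[0,a^\ast]$. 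Granting these, Theorem \ref{theo:KKT} gives an $a^\ast>0$ such that, for every $a\in(0,a^\ast]$, $\|X(t)-X^a(t)\|$ is below any prescribed accuracy on $[0,\bar T_0]$.

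To reach practical finite-time stability, choose the averaging accuracy $\rho>0$ small enough --- using the $\mathcal K_\infty$ bounds on $V_2^a,V_3^a$ --- that $\rho\le\zeta/2$ and that averaged solutions started in $\overline{B}_{\rho}(X^\ast)$ never leave $B_{\zeta/2}(X^\ast)$. With this accuracy, the first pass places $X(\bar T_0)$ within distance $\rho$ of $X^\ast$; restarting from $\overline{B}_{\rho}(X^\ast)$ over the next interval of length $\bar T_0$ (a common settling-time bound that also covers $\overline{B}_{\rho}(X^\ast)$) keeps the averaged trajectory in $B_{\zeta/2}(X^\ast)$ and brings it to $X^\ast$, so $X(t)\in B_{\zeta}(X^\ast)$ on $[\bar T_0,2\bar T_0]$ and $\|X(2\bar T_0)-X^\ast\|\le\rho$. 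Since a time shift of the periodic dither alters neither $L$ nor the average, a single $a^\ast$ works for every restart, and iterating over $[k\bar T_0,(k+1)\bar T_0]$ keeps $X(t)\in B_{\zeta}(X^\ast)$ for all $t\ge\bar T_0$. Combined with the estimate on $[0,\bar T_0]$, every solution from $\mathcal X_0$ satisfies $\|X(t)-X^\ast\|\le\beta(\|X(0)-X^\ast\|,t)+\zeta$ with the same generalized $\mathcal{KL}$ function $\beta$, which is semi-global practical finite-time stability in the sense of Definition \ref{def:FTstab}; it is semi-global because $\mathcal X_0$ was an arbitrary compact set.

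The main obstacle is the uniform-boundedness hypothesis of Theorem \ref{theo:KKT}: Lipschitz-based averaging is unavailable because $\gamma(\cdot)(\cdot)$ fails to be locally Lipschitz at the origin, and the demodulation gains $2/a$ and $16/a^2$ appearing in $\delta_1,\delta_2$ make an $a$-uniform bound on $f$ over $D$ far from evident. The bulk of the work is to establish that such an $L$ exists --- passing to the coordinate $z=Hv+\delta_1$ (and keeping $\xi$) so that the singularly scaled terms enter only through averages that remain $O(1)$ by dither orthogonality, and using compactness of $D$ together with the smoothness of $h$. Everything else is the cascade already assembled in Lemma \ref{lem3}, namely FTISS of the $(z^a,g^a)$ subsystem with respect to $\tilde\xi^a$ together with global finite-time stability of the $\tilde\xi^a$ dynamics, plus the accuracy bookkeeping above.
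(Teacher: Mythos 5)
Your proposal follows the same two-step route as the paper: write the closed loop as $\dot X=f(t,X,a)$ with $X=(x,v,\xi)$, apply the Krasnosel'skii--Krein theorem (Theorem \ref{theo:KKT}) on a compact set $D$ to get the finite-horizon closeness $\|X(t)-X^{a}(t)\|\leq\delta$, and then combine this with the finite-time stability of the averaged system \eqref{eq:crazav} established in Lemma \ref{lem3} to conclude semi-global practical finite-time stability. The only genuine difference is how the second step is discharged: the paper invokes the two-time-scale ISS framework of Teel--Moreau--Ne\v{s}i\'c \cite{1231248} to produce the bound $\|X(t)\|\leq\beta_X(\|X(t_0)\|,t)+c_X$, whereas you carry out the underlying argument by hand, restarting the averaging estimate on successive windows of length $\bar T_0$ and exploiting the uniform settling time of the average; this is more self-contained and makes the bookkeeping explicit (the choice $\rho\leq\zeta/2$, invariance of a small ball under the averaged flow, one $a^\ast$ for all restarts by periodicity of the dither), at the cost of length. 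You are also more careful than the paper on two hypotheses it leaves implicit: you derive positive definiteness of $H^{a}(t)$ along averaged trajectories from the monotonicity of $V_2^{a}$, where Lemma \ref{lem3} simply assumes it, and you flag that the bound $\|f(t,X,a)\|\leq L$ in Theorem \ref{theo:KKT} must hold uniformly in $a\in[0,a^\ast]$ even though $\delta_1,\delta_2$ carry the factors $2/a$ and $16/a^2$. On that last point, your suggested remedy (absorbing the singular terms through the time-varying coordinate $z=Hv+\delta_1(x,t)$) would not straightforwardly deliver the bound, since differentiating $\delta_1$ in $t$ reintroduces terms of order $\omega/a$; note, however, that the paper itself treats this point with a one-line assertion of boundedness on $D$ uniformly in $t$ (smoothness of $h$ plus periodicity of the dither, for fixed $a$), so your remark identifies a soft spot in the paper's own application of Theorem \ref{theo:KKT} rather than a defect unique to your argument.
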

\begin{proof}
The proof proceeds in two steps. In the first step, we consider the application of Theorem \ref{theo:KKT}. For the analysis of the proposed finite-time Newton seeking system, the  Krasnosel'skii-Krein theorem can be applied as follows. 

Consider the state, $X=[x,g,\xi]^\top $, and the corresponding averaged variables $X^{a}=[x^{a},g^{a},\xi^{a}]^\top $. By the analysis provided above, the averaged system has a finite-time stable equilibrium at the origin $X^\ast=[x^\ast,0,vec(H^\ast)]^\top $. Furthermore, the solutions of the system \eqref{eq:crazav} exist and can be contained in a compact set $D \in \mathbb{R}^{2p+p(p+1)/2}$ containing $X^\ast$. Consider the nonlinear system \eqref{eq:ftNS}. By the smoothness of the cost function $h(x)$ and the periodicity of the dither signal, it follows that the right hand side of the system can be bounded on the compact set $D\in \mathbb{R}^{2p+p(p+1)/2}$ uniformly in $t$.  The continuity and the boundedness of the right hand side of \eqref{eq:ftNS} over a compact set $D$ guarantees existence of solution of the averaged system. As a result, one can invoke the Krasnosel'skii-Krein theorem to guarantee that for any $a \in (0,a^\ast)$ there exists a $\overline{T}$ and a $\delta$ such that:
\begin{align*}
\| X(t)- X^{a}(t)\| \leq \delta
\end{align*}
for $t\in [0,\overline{T}]$.

In the second step, we exploit the finite-time stability of the averaged system and the averaging result established in the first step to establish the finite-time practical semi-global stability of the ESC system. 

Using the finite-time stability property of the averaged system and the averaging result for small amplitude signals, one can apply the approach in the proof of Theorem 1 in \cite{1231248} to show that there exist a generalized class $\mathcal{K}_\infty$ function, $\beta_X$ and a constant, $c_X$, such that:
\begin{align*}
\| X(t) \| \leq \beta_X(\|X(t_0)\|, t) + c_X
\end{align*}
for $X(t_0)\in D$.  This completes the proof.
\end{proof}

\section{Simulation Study}\label{sec4}

We consider the minimization of the cost function: $y=1+30(x_1-1)^2+25(x_1-1)(x_2-2)+15(x_2-2)^2$. The finite extremum seeking controller is implemented with the following tuning parameters:  $a=1$, $\omega_1=150$, $\omega_2=200$, $q_1=3$, $q_2=1.5$, $c_1=1$, $c_2=10^{-4}$, $k=5$, $K=10$ and $K_2=100$. The initial conditions are: $x(0)=[0,1]^\top $, $v(0)=0.01$ and $\xi(0)=[1,0,1]^\top $. The simulation results are shown in Figure \ref{fig1}.  Figure \ref{fig1} shows the trajectories of the decision variables $\hat{x}_1$ and $\hat{x}_2$ along with the trajectories of the target average system \eqref{eq:tarsys}. It also shows the corresponding value of the cost function for the Newton seeking system and the average system. The results demonstrate that the Newton seeking controller approximates the trajectories of the target finite-time system. 
%
\begin{figure}[htbp!]
\centering   
\includegraphics[width=0.7\linewidth]{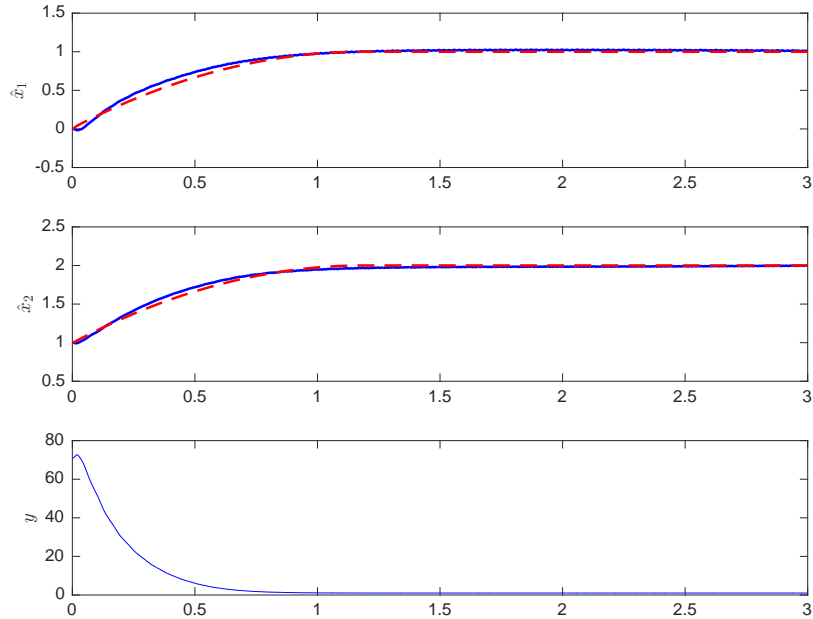}
\caption{Performance of the Finite-time Newton seeking system. The graph shows the decision variables ($\hat{x}_1$, $\hat{x}_2$) (solid line) and the target average variables ($x_1^{a}$, $x_2^{a}$) (dashed line) and the cost function $y$.}
\label{fig1}
\end{figure}

To highlight the finite-time property of the system, we consider the performance of the Newton seeking system with varying initial conditions. In Figure \ref{fig2}, we show the effect of changing the initial conditions on $x_1(0)$ from 0 to 2 with $x_2(0)=1$. We repeat the exercise by changing $x_2(0)$ from 0 to 4 with $x_1(0)=0$. The corresponding trajectories of the system are shown in Figure \ref{fig3}.

\begin{figure}[htbp!]
\centering   
\includegraphics[width=0.7\linewidth]{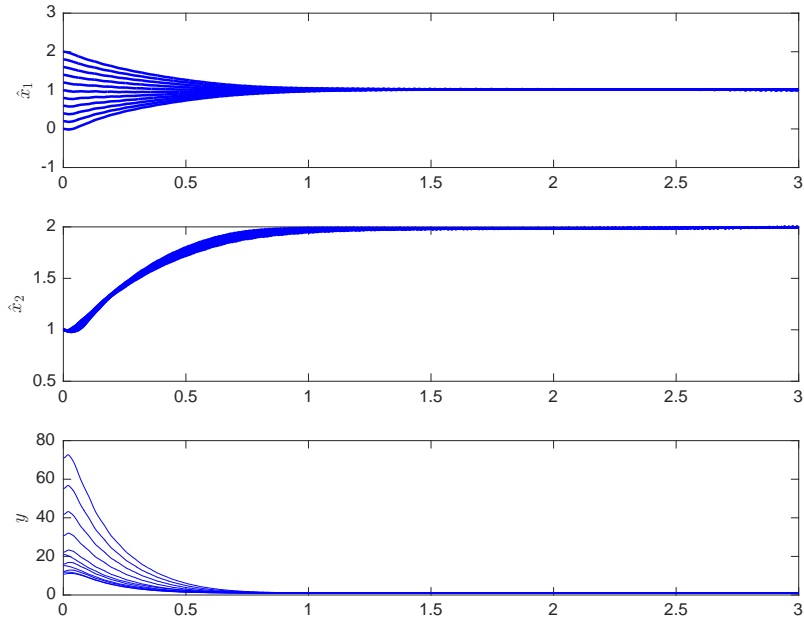}
\caption{Performance of the Finite-time Newton seeking system. The graph shows the decision variables $x_1$ and $x_2$  for the finite-time Newton seeking with varying initial conditions in $x_1$.}
\label{fig2}
\end{figure}

\begin{figure}[htbp!]
\centering   
\includegraphics[width=0.7\linewidth]{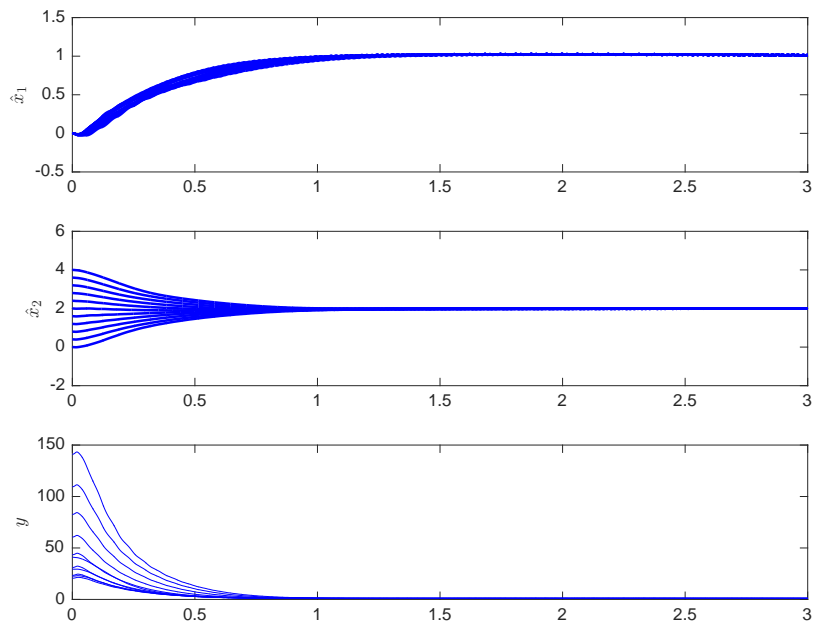}
\caption{Performance of the Finite-time Newton seeking system. The graph shows the decision variables $x_1$ and $x_2$  for the finite-time Newton seeking with varying initial conditions in $x_2$.}
\label{fig3}
\end{figure}

Finally, we consider the effect of changing the gain $k$ on the Newton seeking system. We consider the same tuning parameters and initial conditions. Three gain values are considered , $k=1$, $k=2$ and $k=4$. The results are shown in Figure \ref{fig4}.  The Newton seeking system performs as expected. The increase in gain improves the transient performance of the system. It also reduces the finite convergence time of the system. 

\begin{figure}[htbp!]
\centering   
\includegraphics[width=0.7\linewidth]{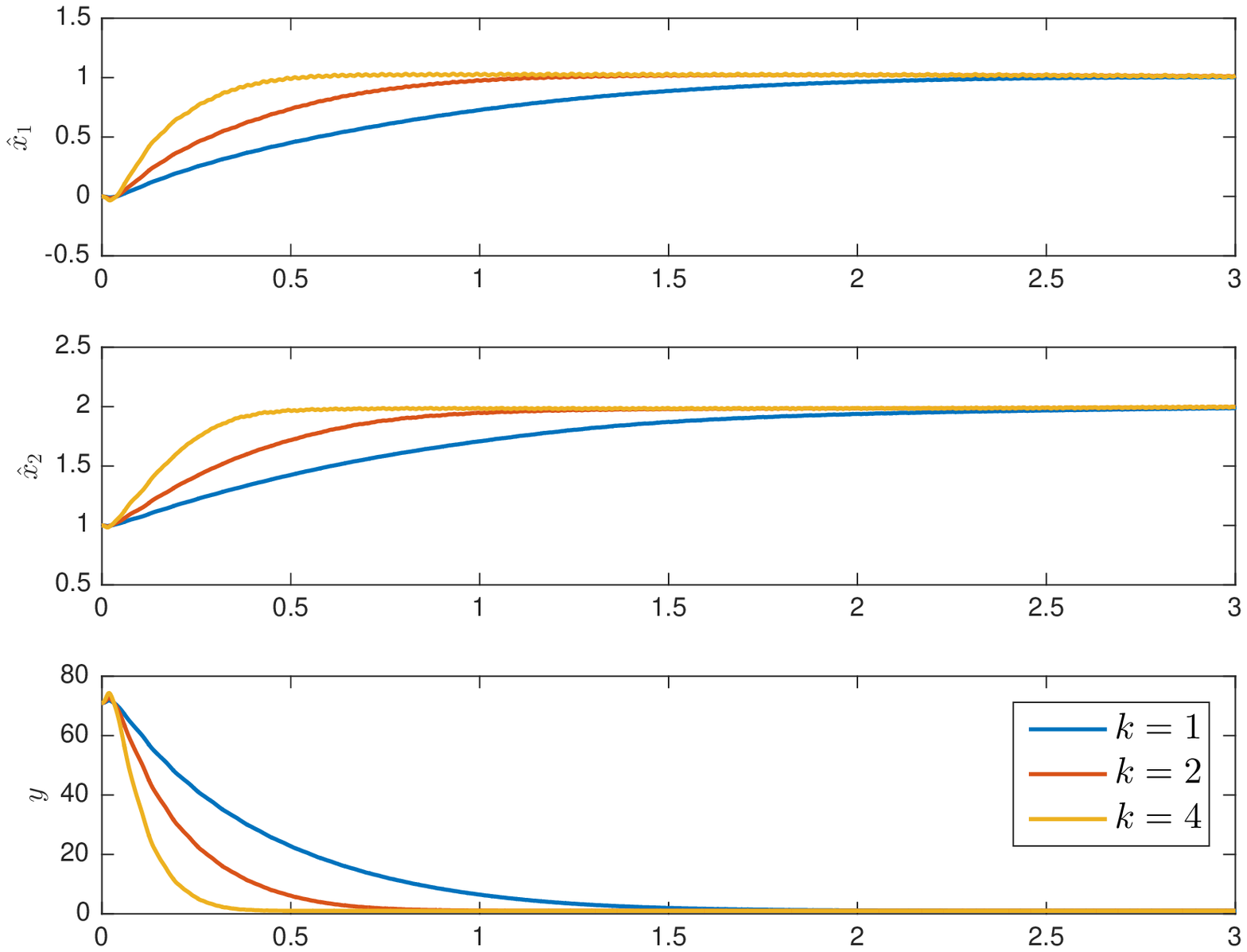}
\caption{Performance of the Finite-time Newton seeking system. The graph shows the decision variables $x_1$ and $x_2$  for the finite-time Newton seeking with varying gains $k$.}
\label{fig4}
\end{figure}

\vspace{-0.5cm}\section{Conclusion} \label{sec5}
In this study, we proposed a Newton seeking design for the solution of real-time optimization problems for unknown multivariate static maps that achieves finite-time convergence to the unknown optimum. The proposed Newton seeking system yields an averaged system with a finite-time stable equilibrium at the unknown optimum. Using classical averaging results for dynamical systems with continuous right hand sides, it is shown that the optimum is a finite-time practically semi-globally stable equilibrium of the ESC system.
\clearpage

\appendix
\begin{proof} (Proof of Lemma \ref{lem2})
As in the proof for the target system, we propose the following change of coordinates $z^{a}=H^\ast v^{a}+\nabla h(x^{a})$ and $g^{a}=\nabla h(x^{a})$ and write the average dynamics as follows:
\begin{equation} \label{eq:crazav2}
\begin{aligned}
\dfrac{\d g^{a}}{\d t} =& \,{k}H^{\ast} \gamma(H^{\ast^{-1}}(z^{a} -g^{a})) H^{\ast^{-1}}(z^{a} { -}g^{a}) \\
\dfrac{\d z^{a}}{\d t}=&\,  -\frac{K}{T}H^\ast \int_0^{T} \bigg( \frac{c_1 (H^\ast v^{a}+ \delta_1(x^{a},t))}{\| H^\ast v^{a}+ \delta_1(x^{a},t)\|^{\alpha_{1}}} + \frac{c_2 (H^\ast v^{a}+ \delta_1(x^{a},t))}{\|H^\ast v^{a}+ {\delta_1}(x^{a},t)\|^{\alpha_{2}}} \bigg) dt  \,\\
&+{k}H^{\ast} \gamma(H^{\ast^{-1}}(z^{a} -g^{a}))(z^{a} -g^{a}).
\end{aligned}
\end{equation}
Under this assumption, we pose the Lyapunov function: $V_1^{a} = \frac{1}{2} \| z^{a}\|^2 + \frac{1}{2} \| g^{a}\|^2$. Proceeding as in the proof of Lemma \ref{lem1}, its time derivative is:
\begin{align}
\dot{V}_1^{a} =& -\frac{K}{T} (z^{a})^\top H^\ast  \int_0^{T} \bigg(  \frac{c_1 (H^{\ast} v^{a} + \delta_1(x^{a},t))}{\|H^{\ast} v^{a}  + \delta_1(x^{a},t))\|^{\alpha_{1}}}+ \frac{c_2 (H^{\ast}v^{a} + \delta_1(x^{a},t))}{\|H^{\ast} v^{a} + \delta_1(x^{a},t))\|^{\alpha_{2}}} \bigg) dt \label{eq:ineq1}\\
&- k  \gamma(H^{\ast^{-1}}(z^{a} -g^{a})) \|z^{a} -g^{a}\|^2 \label{eq:ineq1} + 2k  \gamma(H^{\ast^{-1}}(z^{a} -g^{a}))(z^{a})^\top (z^{a} -g^{a}). \nonumber
\end{align}

Let us consider the first term on the right hand side of the last inequality:
\begin{align*}
 \Phi_1=(z^{a})^\top  H^\ast \frac{1}{T}\int_0^T \bigg(  \frac{ (H^{\ast} v^{a}+ \delta_1(x^{a},t))}{\|H^{\ast} v^{a} + \delta_1(x^{a},t))\|^{\alpha_{1}}}\bigg) dt
 \end{align*}
 
 Let $\theta(t)=H^{\ast} v^{a}+\delta_1(x^{a},t)$.  The function $\Phi_1$ can written explicitly as:
 $$\Phi_1 = \bigg( \frac{1}{T} \int_0^T  \theta(t)dt \bigg)^\top  H^\ast \frac{1}{T} \int_0^T  \frac{ \theta(t)}{\|\theta(t)\|^{\alpha_{1}}}   dt.$$
 First, we note that $\Phi_1$ is nonnegative. We then consider the following expression:
$ \Phi_1^\prime  = \Phi_1 \bigg\|\frac{1}{T}\int_{0}^{T} |\theta(t)| dt \bigg\|^{\alpha_1}.$
This can be rewritten as:
 \begin{align*}
\Phi_1^\prime = \bigg( \frac{1}{T} \int_0^T  \theta(t)dt \bigg)^\top  H^\ast  \frac{1}{T} \int_0^T   \theta(t)\frac{\bigg\| \frac{1}{T} \int_{0}^{T}| \theta(t) |dt \bigg\|^{\alpha_1}}{\|\theta(t)\|^{\alpha_{1}}}   dt. 
\end{align*}

By a simple application of Jensen's inequality for concave functions (see e.g. \cite{hardy52}, Theorem 189), it follows that for any $0<\alpha<1$ that
$$
\bigg\| \frac{1}{T} \int_{0}^{T} |\theta(t)| dt \bigg\|^{\alpha} \geq\frac{1}{T} \int_{0}^{T} \|\theta(t)\|^{\alpha} dt.
$$
As a result, we obtain for $0<\alpha_1<1$:
 \begin{align*}
\Phi_1^\prime \geq \bigg( \frac{1}{T} \int_0^T  \theta(t)dt \bigg)^\top  H^\ast \frac{1}{T^2} \int_0^T   \theta(t)\frac{ \int_{0}^{T}\| \theta(t)\|^{\alpha_1} dt}{\|\theta(t)\|^{\alpha_{1}}}   dt. 
\end{align*}
Next, we note that the following inequality holds:
\begin{align*}
\frac{\| \theta(t)\|}{\| \theta(t)\|^{\alpha_{1}}}  \int_{0}^{T}\| \theta(t)\|^{\alpha_1} dt \geq \| \theta(t)\|
\end{align*}
$\forall t \in [0,T]$. Therefore, we can write:
 \begin{align*}
\Phi_1^\prime \geq \bigg( & \int_0^T  \theta(t)dt \bigg)^\top  H^\ast  \frac{1}{T^2} \int_0^T\frac{\theta(t)}{\| \theta(t)\|} \frac{\|\theta(t)\|}{\| \theta(t)\|^{\alpha_1} }\bigg(\int_{0}^{T}\| \theta(t)\|^{\alpha_1} dt \bigg) dt
\end{align*}
and,
\begin{align*}
\Phi_1^\prime=\Phi_1 \bigg\| \frac{1}{T} \int_{0}^{T} |\theta(t)| dt \bigg\|^{\alpha_1} &\geq \frac{\lambda_{min}(H^\ast) }{T}\bigg\| \frac{1}{T}\int_0^T  \theta(t) dt \bigg\|^2 .
\end{align*}
Finally, we obtain:
\begin{align*}
\Phi_1\geq \frac{\lambda_{min}(H^\ast)}{T} \frac{\bigg\| \frac{1}{T}\int_0^T  \theta(t) dt \bigg\|^2}{ \bigg\| \frac{1}{T} \int_{0}^{T} | \theta(t)| dt \bigg\|^{\alpha_1}}=\frac{\lambda_{min}(H^\ast) \|z^{a}\|^2}{T \| z^{a}\|^{\alpha_1}}.
\end{align*}

In a similar fashion, we can define the following function: 
  \begin{align*}
 \Phi_2= \bigg(\frac{1}{T} \int_0^T  &\theta(t) dt \bigg)^\top H^\ast \frac{1}{T} \int_0^T  \frac{ \theta(t)}{\|\theta(t)\|^{\alpha_{2}}}   dt.
 \end{align*}
 We can rewrite $\Phi_2$ to generate a lower bound as follows:
  \begin{align*}
& \Phi_2 \leq\lambda_{min}(H^\ast)  \bigg( \frac{1}{T} \int_0^T  \theta(t) dt \bigg)^\top  \frac{1}{T} \int_0^T  \frac{ \theta(t)}{\|\theta(t)\|^{\alpha_{2}}}   dt 
 + \frac{\lambda_{min}(H^\ast) \left\|\frac{1}{T} \int_0^T  \theta(t) dt \right\|^2}{\left\| \int_0^T |\theta(t)| dt \right\|^{\alpha_2}} 
 - \frac{\lambda_{min}(H^\ast) \left\|\frac{1}{T} \int_0^T  \theta(t) dt \right\|^2}{\left\| \int_0^T |\theta(t)| dt \right\|^{\alpha_2}}.
 \end{align*}
This can be rewritten as:
\begin{align*}
& \Phi_2 \leq\lambda_{min}(H^\ast)  \bigg( \frac{1}{T} \int_0^T  \theta(t) dt \bigg)^\top  \frac{1}{T} \int_0^T  \frac{ \theta(t)}{\|\theta(t)\|^{\alpha_{2}}}   dt  + \frac{\lambda_{min}(H^\ast) \left\|\frac{1}{T} \int_0^T  \theta(t) dt \right\|^2}{\left\| \int_0^T |\theta(t)| dt \right\|^{\alpha_2}} 
 - \frac{\lambda_{min}(H^\ast) \left\|\frac{1}{T} \int_0^T  \theta(t) dt \right\|^2}{\int_0^T \|\theta(t)\|^{\alpha_2} dt}.
 \end{align*}
We see that $\Phi_2 \geq \frac{\|z^a\|^2}{T^{\alpha_{2}}} \|z^a\|^{\alpha_2}$, if $\|\theta(t)\|^\alpha_2 \leq  \int_0^T \|\theta(t)\|^{\alpha_2} dt$. Since  this applies for all $t$, the inequality for $\Phi_2$ holds. One can then substitute the inequalities in \eqref{eq:ineq1} to obtain the following:
\begin{equation*} 
\begin{aligned}
\dot{V}_1^{a} \leq-K \lambda_{min}(H^\ast) \bigg(\frac{c_1}{T} \frac{\|z^{a}\|^2}{\|z^{a}\|^{\alpha_{1}}}+&\frac{c_2}{T^{\alpha_{2}}}  \frac{\|z^{a}\|^2}{\|z^{a}\|^{\alpha_{2}}}\bigg)
- k  \gamma(H^{\ast^{-1}}(z^{a} -g^{a})) \|z^{a} -g^{a}\|^2 \\&+ k  \gamma(H^{\ast^{-1}}(z^{a} -g^{a}))(z^{a})^\top (z^{a} -g^{a}).
\end{aligned}
\end{equation*}
Again, following the arguments in the proof of Lemma \ref{lem1}, there exists a constant $k_8>0$ such that:
\begin{align*}
\dot{V}_1^{a} \leq &-k_8 (2 V_1^{a})^{(2-\alpha_1)/2}=-2^{(2-\alpha_1)/2} k_8 V_1^{a^{(2-\alpha_1)/2}}.
\end{align*}

Since $(2-\alpha_1)/2<1$, it follows that the system has a globally finite-time equilibrium at $z^{a}=0$ and $g^{a}=0$ when $H^{a}=H^\ast$. This completes the proof.
\end{proof}

	\end{document}